\newcommand{\inner}[2]{\left\langle #1, #2 \right\rangle}
\DeclareMathOperator*{\argmin}{arg\,min}
\title{Convergence of an actor-critic gradient flow for entropy regularised MDPs in general spaces}
\author{Denis Zorba}
\address{School of Mathematics, University of Edinburgh, UK}
\email{ezorba@ed.ac.uk}
\author{David \v{S}i\v{s}ka}
\address{School of Mathematics, University of Edinburgh, UK}
\email{d.siska@ed.ac.uk}
\author{Lukasz Szpruch}
\address{School of Mathematics, University of Edinburgh, UK, 
The Alan Turing Institute, UK, and Simtopia, UK}
\email{l.szpruch@ed.ac.uk}
\newtheorem{lemma}{Lemma}[section]
\newtheorem{definition}{Definition}[section]
\newtheorem{assumption}{Assumption}[section]
\newtheorem{theorem}{Theorem}[section]
\newtheorem{corollary}{Corollary}[section]
\newtheorem{remark}{Remark}[section]
\begin{document}

\footnotetext{Keywords: Reinforcement learning, Actor-Critic method, Entropy regularisation, Approximate gradient flow, Non-convex optimization, Global convergence, Function approximation.}

\maketitle

\begin{abstract}
We prove the stability and global convergence of a coupled actor-critic gradient flow for infinite-horizon and entropy-regularised Markov decision processes (MDPs) in continuous state and action space with linear function approximation under Q-function realisability.
We consider a version of the actor critic gradient flow where the critic is updated using temporal difference (TD) learning while the policy is updated using a policy mirror descent method on a separate timescale.
For general action spaces, the relative entropy regularizer is unbounded and thus it is not clear a priori that the actor-critc flow does not suffer from finite-time blow-up.
Therefore we first demonstrate stability which in turn enables us obtain a convergence rate of the actor critic flow to the optimal regularised value function.
The arguments presented show that timescale separation is crucial for stability and convergence in this setting.
\end{abstract}

\section{Introduction}
In reinforcement learning (RL) an agent aims to learn an optimal policy that maximizes the expected cumulative reward through repeated interactions with its environment.
Such methods typically involve two key components: policy evaluation and policy improvement.
During policy evaluation, the advantage function corresponding to a policy, or its function approximation, is updated using state, action and reward data generated under this policy.
Policy improvement then uses this approximate advantage function to update the policy, most commonly through some policy gradient method.
Algorithms that explicitly combine these two components are known as actor-critic (AC) methods \cite{konda}, where the actor corresponds to policy improvement and the critic to policy evaluation.

There are many policy gradient methods to choose from.
In the last decade trust region policy optimization (TRPO) methods~\cite{schulman2015trust} and methods inspired by these like PPO~\cite{schulman2017proximal} have become increasingly well-established due to their impressive empirical performance. 
Largely, this is because they alleviate the difficulty in choosing appropriate step sizes for the policy gradient updates: for vanilla policy gradient even a small change in the parameter may result in large change in the policy, leading to instability, but TRPO prevents this by explicitly ensuring the KL divergence between successive updates is smaller than some tolerance. 
Mirror descent replaces the TRPO's hard constraint with a penalty leading to a first order method which is also ameanable to analysis.
Indeed, at least for direct parametrization, it is known to converge with sub-linear and even linear rate for entropy regularised problems (depending on exact assumptions)~\cite{lan, lan2023policy,david_fisher}.  

Due to the favourable analytical properties of mirror descent, in this paper we consider a version of the actor critic gradient flow where the policy is updated using a policy mirror descent method 
while
the critic is updated using temporal difference (TD) on a separate timescale. 

Entropy-regularised MDPs are widely used in practice since the entropic regularizer leads to a number of desirable properties: it has a natural interpretation as something that drives exploration, 
it ensures that there is a unique optimal policy and it can accelerate convergence of policy gradient methods ~\cite{mei2020global}.
However, analysing the stability and convergence of actor-critic methods in this entropy-regularised setting with general state and action spaces remains highly non-trivial due to lack of a priori bounds on the value functions.

To address the actor critic methods for entropy regularised MDPs in general action spaces, a careful treatment of tools from two timescale analysis, convex analysis over both Euclidean spaces and measure spaces must be deployed. 

In this paper, we address precisely this challenge. We study the stability and convergence of a widely used actor-critic algorithm in which the critic is updated using Temporal Difference (TD) learning \cite{sutton1988learning}, and the policy is updated through Policy Mirror Descent \cite{lan}. Our analysis employs a two-timescale update scheme \cite{borkar1997actor}, where both the actor and critic are updated at each iteration with the critic updated on a faster timescale.

\subsection{Related works}

We focus on the subset of RL literature that address the convergence of coupled actor-critic algorithms.
In the unregularised setting, actor-critic methods have been studied extensively. The first convergence results in the two-timescale regime established asymptotic convergence in the continuous-time limit of coupled updates (\cite{borkar1997actor,konda}). Most modern research employs linear function approximation for the critic, where linear convergence rates have been obtained under various assumptions on the step-sizes of the actor and critic (\cite{barakat22a-linear,linear2,stochastic_approx_application_AC}).

Closely related to our work is \cite{jordan}, which considers the same two-timescale actor-critic scheme in the continuous-time limit for unregularised MDPs, with an overparameterized neural network used for the critic. However, convergence to the optimal policy was not established, and a restarting mechanism was required to ensure the stability of the dynamics.

In the entropy-regularised setting, \cite{cayci,cayci_neural} address the convergence of a natural actor critic algorithm. However, the convergence and stability of these results rely on the finite cardinality of the action space in presence of entropy regularisation.

\subsection{Our Contribution}

Under linear $Q^{\pi}_{\tau}$-realisability assumption, we address the following question:

\textit{``Is the actor-critic gradient flow for entropy-regularised MDPs in general action spaces stable and convergent, and if so, at what rate?''}

There are two main technical challenges one has to overcome when working with entropy-regularised MDPs in general action spaces.

\begin{itemize}
\item Even in mirror descent with exact advantage, the rate of convergence depends on a constant term $\int_S \operatorname{KL}(\pi^*(\cdot|s)|\pi_0(\cdot|s)d^{\pi^\ast}_\rho(ds)$. 
See~\cite{lan2023policy, david_fisher}.
In general action spaces, without entropy regularisation it is almost impossible to choose $\pi_0$ which would make this term finite, see Remark~\ref{remark:entropy_action}.
Thus we need to include the regularisation in the analysis.

\item Moreover, ensuring that the relative entropy does not blow up is difficult in general action spaces.
In the finite action space setting, for any measure $\mu \in \mathcal{P}(A)$ such that $\mu(a_i) > 0$ and for all $s \in S$ it holds that $\operatorname{KL}(\pi(\cdot|s) |\mu) \leq \log |A|$. 
In general action spaces the $\operatorname{KL}$ divergence has no  upper bound (can be $+\infty$) even if $\mu$ has full support. 
Under mild assumptions we show that the $\operatorname{KL}$ divergence does not blow up in finite time, see Corollary~\ref{corr:all_gamma_KL}.
\end{itemize}

Our main contributions are as follows:
\begin{itemize}
\item We study a common variant of actor-critic where the critic is updated using temporal difference (TD) learning and the policy is updated using mirror descent. Similarly to \cite{konda,jordan}, we analyse the coupled updates in the continuous-time limit, resulting in a dynamical system where the critic flow is captured by a \textit{semi}-gradient flow and the actor flow corresponds to an approximate Fisher--Rao gradient flow over the space of probability kernels.
\item By combining convex analysis over the space of probability measures with classical Euclidean convex analysis, we develop a Lyapunov-based stability framework that captures the interplay between entropy regularisation and timescale separation, and establish stability of the resulting dynamics.
\item We prove convergence of the actor-critic dynamics for entropy-regularised MDPs with infinite action spaces. 
\end{itemize}

\subsection{Notation}

Let $(E, d)$ denote a Polish space (i.e., a complete separable metric space). We always equip a Polish space with its Borel sigma-field $\mathcal{B}(E)$. Denote by $B_b(E)$ the space of bounded measurable functions $f : E \to \mathbb{R}$ endowed with the supremum norm $|f|_{B_b(E)} = \sup_{x \in E} |f(x)|$.
Denote by $\mathcal{M}(E)$ the Banach space of finite signed measures $\mu$ on $E$ endowed with the total variation norm $|\mu|_{\mathcal{M}(E)} = |\mu|(E)$, where $|\mu|$ is the total variation measure.
Recall that if $\mu = f\, d\rho$, where $\rho \in \mathcal{M}_+(E)$ is a nonnegative measure and $f \in L^1(E, \rho)$, then $|\mu|_{\mathcal{M}(E)} = |f|_{L^1(E, \rho)}$. Denote by $\mathcal{P}(E) \subset \mathcal{M}(E)$ the set of probability measures on $E$.
Let $\delta_x \in \mathcal P(E)$ denote the Dirac measure with mass at $x\in E$.
Moreover, we denote the Euclidean norm on $\mathbb{R}^{N}$ by $|\cdot|$ with inner product $\inner{\cdot}{\cdot}$. Given some $A, B \in \mathbb{R}^{N \times N}$, by $\lambda_{\min}(A)$ we denote the minimum eigenvalue of $A$ and write $A \succeq B$ if and only if $A - B$ is positive semidefinite. 
Finally, we denote by $\left| A \right|_{\mathrm{op}}$ the operator norm of $A$ induced by the Euclidean norm, $|A|_{\mathrm{op}} := \sup_{ |x| \neq 0}\frac{|Ax|}{|x|}$.

\subsection{Entropy regularised Markov Decision Processes} Consider an infinite horizon Markov Decision Process $(S,A,P,c,\gamma)$, where the state space $S$ and action space $A$ are Polish, $P\in \mathcal{P}(S | S \times A)$ is the state transition probability kernel, $c$ is a bounded cost function and $\gamma \in (0,1)$ is a discount factor.
Let $\mu \in \mathcal{P}(A)$ denote a reference probability measure and $\tau > 0$ denote a regularisation parameter.
Let $\rho \in \mathcal P(S)$ be an arbitrary initial state distribution.
To ease notation, for each $\pi \in \mathcal{P}(A|S)$ we define the kernels $P_{\pi}(ds'|s) := \int_{A} P(ds'|s,a)\pi(da|s)$ and $P^{\pi}(ds',da'|s,a) := P(ds'|s,a)\pi(da'|s')$.
Due to~\cite[Proposition 7.28]{bertsekas1996stochastic} we can construct a probability measure $\mathbb P^\pi_\rho$, expectation $\mathbb E^\pi_\rho$ and stochastic processes $(s_n)_{n\in \mathbb N_0}$, $(a_n)_{n\in \mathbb N_0}$ with the conditional transition probabilities corresponding to those given by $P^\pi$ and $\pi$ respectively and with $s_0 \sim \rho$.
Let $\mathbb{E}_{s}^{\pi} := \mathbb{E}_{\delta_s}^{\pi}$.
For $\pi \in \mathcal{P}(A|S)$ define the regularised value function as
\begin{equation}
S\ni s \mapsto V^{\pi}_{\tau}(s)= \mathbb{E}_{s}^{\pi}\left[\sum_{n=0}^\infty\gamma^n \Big(c(s_n,a_n) + \tau \operatorname{KL}(\pi(\cdot|s_n)|\mu)\Big)\right]\in \mathbb{R}\cup \{\infty\}\,,
\end{equation} where $\operatorname{KL}(\pi(\cdot|s)|\mu)$ is the Kullback-Leibler (KL) divergence of $\pi(\cdot|s)$ with respect to $\mu$, define as $\operatorname{KL}(\pi(\cdot|s)|\mu) := \int_{A} \ln \frac{d\pi}{d\mu}(a|s) \pi(da|s)$ if $\pi(\cdot|s)$ is absolutely continuous with respect to $\mu$, and infinity otherwise.

For a given initial distribution $\rho \in \mathcal{P}(S)$, the optimal value function is defined as
\begin{equation}
\label{eq:optim}
V^{*}_{\tau}(\rho) = \min_{\pi \in \mathcal{P}(A|S)}V^{\pi}_{\tau}(\rho), \quad \text{with} \,\, V^{\pi}_{\tau}(\rho) := \int_{S} V^{\pi}_{\tau}(s)\rho(ds)
\end{equation}
and we refer to $\pi^* \in \mathcal{P}(A|S)$ as the optimal policy if $V^*_{\tau}(\rho) = V^{\pi^*}_{\tau}(\rho)$. 
The Bellman Principle for entropy regularised MDPs, see Theorem~\ref{thm:dynamics_programming}, suggests that without loss of generality, it is sufficient to consider policies from the class given by Definition~\ref{def:admissible_policies} below.

\begin{definition}[Admissible Policies]
\label{def:admissible_policies}
Let $\Pi_{\mu}$ denote the class of policies for which there exists $f \in B_{b}(S \times A)$ with \[\pi(da|s) = \frac{\exp(f(s,a))}{\int_{A} \exp(f(s,a))\mu(da)}\mu(da).\]
\end{definition}

For each $\pi \in \Pi_\mu$ the value function $V^{\pi}_{\tau}$ is the unique bounded solution of the on-policy Bellman equation 
\begin{equation}
\label{eq:on_policy_bellman}
V^{\pi}_{\tau}(s)=\int_{A}\left(Q_\tau^\pi(s,a)+\tau \ln \frac{d \pi}{d \mu}(a,s)\right)\pi(da|s)\,,    
\end{equation}
see e.g.~\cite[Lemma B.2]{david_fisher}.

For each $\pi \in \Pi_\mu$, we define the state-action value function $Q^{\pi}_{\tau}\in B_b(S\times A)$ by 
\begin{equation}\label{eq:Q_func}
Q^{\pi}_{\tau}(s,a)=c(s,a)+\gamma\int_S V_{\tau}^{\pi}(s')P(ds'|s,a)\,.
\end{equation}
We see that $Q^{\pi}_{\tau} : S \times A \to \mathbb{R}$ is a fixed point of $\mathrm{T}^{\pi} : B_{b}(S\times A) \to B_{b}(S\times A)$,  defined as
\begin{equation}\label{eq:bellman_operator_Def}
\mathrm{T}^{\pi} f(s,a) = c(s,a) + \gamma\int_{S\times A} f(s',a') P^{\pi}(ds',da'|s,a) + \tau\gamma \int_{S} \operatorname{KL}(\pi(\cdot|s')|\mu) P(ds'|s,a).
\end{equation}
As one can show this operator is a contraction, we see that $Q^{\pi}_{\tau}$ is in fact the unique fixed point.

\section{Mirror-Descent and the Fisher--Rao Gradient flow}
Let the soft advantage function be defined as 
\[
A^{\pi}_{\tau}(s,a) := Q^{\pi}_{\tau}(s,a) + \tau \ln \frac{d\pi}{d\mu}(s,a) - V^{\pi}_{\tau}(s).
\]
Then for some $\lambda > 0$ and $\pi_0 \in \Pi_{\mu}$, the Policy Mirror Descent update rule reads as
\begin{align}\label{eq:MirrorDescent}
\pi^{n+1}(\cdot|s) &= \argmin_{m \in \mathcal{P}(A)}\left[ \int_{A} A^{\pi^{n}}_{\tau}(s,a)(m(da) - \pi^{n}(da|s)) + \frac{1}{\lambda}\operatorname{KL}(m|\pi^{n}(\cdot|s)).\right] 
\end{align} 
Due to~\cite[Lemma 1.4.3]{dupuis1997weak} we know that this pointwise minimum is achieved by
\begin{equation}\label{eq:pointwise}
\frac{d\pi^{n+1}}{d\pi^{n}}(a,s) = \frac{\exp\left(-\lambda A^{\pi^{n}}_{\tau}(s,a) \right)}{\int_{A} \exp\left(-\lambda A^{\pi^{n}}_{\tau}(s,a)\right)  \pi^{n}(da|s) }.
\end{equation}
From~\eqref{eq:on_policy_bellman} we note that for any $\pi \in \mathcal{P}(A|S)$, it holds that $\int_{A} A^{\pi}_{\tau}(s,a)\pi(da|s) = 0$. Hence taking the logarithm of \eqref{eq:pointwise} we have
\[\ln \frac{d\pi^{n+1}}{d\mu}(s,a) - \ln \frac{d\pi^{n}}{d\mu}(s,a) = -\lambda A^{\pi^n}_{\tau}(s,a) - \ln \int_{A} e^{-\lambda A^{\pi^n}_{\tau}(s,a)} \pi^{n}(da|s).\]Interpolating in the time variable and letting $\lambda \to 0$ we expect to retrieve the Fisher--Rao gradient flow for the policies 
\begin{equation}\label{eq:truefisherrao}
\partial_t \ln \frac{d\pi_t}{d\mu}(s,a) =-\left(A^{\pi_t}_{\tau}(s,a) - \int_{A}A^{\pi_t}_{\tau}(s,a)\pi_t(da|s) \right) = -A^{\pi_t}_{\tau}(s,a).
\end{equation}
Note that the soft advantage formally corresponds to the functional derivative of the value function with respect to the policy $\pi^{n}$ and thus \eqref{eq:truefisherrao} can be seen as a gradient flow of the value function over the space of kernels $\mathcal{P}(A|S)$ (see \cite{david_fisher} for a detailed description of the functional derivative). 

\begin{remark}\label{remark:entropy_action}
In the case where the advantage function is fully accessible for all $t \geq 0$, \cite{david_fisher}[Theorem 2.8] shows that the entropy regularisation in the value function induces an exponential convergence to the optimal policy.

More specifically, their result shows that for all $t\geq 0$ we have 
\[
0\leq V^{\pi_t}_{\tau}(\rho) - V^{\pi^*}_{\tau}(\rho) \leq \frac{\tau}{(1-\gamma)(e^{\tau t} - 1)}\left(\int_{S} \operatorname{KL}(\pi^*(\cdot|s) | \pi_0(\cdot|s) ) d_{\rho}^{\pi^*}(ds) \right)\,.
\]
If the action space has finite cardinality and $\pi_0$ is chosen to be uniform we see that $\operatorname{KL}(\pi^*(\cdot|s)|\pi_0(\cdot|s)) \leq \log |A|$ for all $s\in S$, where $|A|$ represents the cardinality of the action space. 
One can then let $\tau \to 0$ in the above estimate to formally obtain convergence rate of order $1/t$ for the unregularised problem.

In the setting of general action spaces $\operatorname{KL}(\pi^*(\cdot|s)|\pi_0(\cdot|s))$ is finite only if the density $\frac{d\pi^*}{d\pi_0}$ exists.
However, by the dynamic programming principle for the unregularised problem~\cite[Theorem 4.2.3]{hernandez-lerma1996discrete} shows that the optimal policies will have support on a mixture of Dirac distributions.
Therefore, $\operatorname{KL}(\pi^*(\cdot|s)|\pi_0(\cdot|s))$ will be finite only if $\pi_0$ is also a mixture of Dirac distributions with support which contains the support of $\pi^*(\cdot|s)$ for all $s \in S$. 
It is not realistic to assume that one can guess the initial policy $\pi_0$ which will have the above property.
However, in the entropy regularised case, Theorem~\ref{thm:dynamics_programming} tells us $\pi^*(\cdot|s)$ has full support on $A$ and so one simply has to choose $\pi_0(\cdot|s)$ to have full support on the action space $A$ for all $s \in S$.

\end{remark}

\section{Actor Critic Methods}
Given some feature mapping $\phi : S \times A \to \mathbb{R}^N$, we parametrise the state-action value function as $Q(s,a;\theta) := \inner{\theta}{\phi(s,a)}$. Moreover, we let the approximate soft advantage function be defined as
\begin{equation}\label{eq:approximate_advantage}
A(s,a;\theta) = Q(s,a;\theta) + \tau \ln \frac{d\pi}{d\mu}(s,a) - \int_{A}\left(Q(s,a;\theta) + \tau \ln \frac{d\pi}{d\mu}(s,a) \right) \pi(da|s).
\end{equation}

The Mean Squared Bellman Error (MSBE) is defined as
\begin{equation}
\mathrm{MSBE}(\theta,\pi) = \frac{1}{2}\int_{S \times A} (Q(s,a;\theta) - \mathrm{T}^{\pi}Q(s,a;\theta))^2d_{\beta}^{\pi}(da,ds)
\end{equation}
where for some fixed $\beta \in \mathcal{P}(S \times A)$, $d_{\beta}^{\pi} \in \mathcal{P}(S\times A)$ is the state-action occupancy measure defined in \eqref{eq:occupancy}. Given that $\beta \in \mathcal{P}(S \times A)$ has full support, by \eqref{eq:bellman_operator_Def} it holds that $\mathrm{MSBE}(\theta,\pi) = 0$ if and only if $Q(s,a;\theta) = Q^{\pi}_{\tau}(s,a)$ for all $s \in S$ and $a \in A$. Hence one approach to implementing the policy mirror descent updates is to calculate the optimal parameters for $Q(s,a;\theta)$ by minimising the MSBE at each policy mirror descent iteration and then update the policy using variable steps $\left\{\lambda_{n} \right\}_{n \geq 0}$.
This reads as
\begin{equation}
\label{eq:solve_fully}
\left\{
\begin{split}
& \theta^{n+1}  = \argmin_{\theta \in \mathbb{R}^{N}} \mathrm{MSBE}(\theta,\pi^n)\,,\\
& \frac{d\pi^{n+1}}{d\pi^{n}}(a,s)  = \frac{\exp\left(-\lambda_n A(s,a;\theta^{n+1}) \right)}{\int_{A} \exp\left(-\lambda_n A(s,a;\theta^{n+1})\right)  \pi^{n}(da|s) }.
\end{split}
\right.
\end{equation}

To avoid fully solving the $\argmin$ in \eqref{eq:solve_fully} for each policy update, one can update the critic using a semi-gradient descent on a different timescale to the policy update. Let $\{h_n\}_{n\geq 0}$ be the step-sizes of the critic at iteration $n\geq 0$. Let the semi-gradient $g: \mathbb{R}^N \times \mathcal{P}(A|S) \to \mathbb{R}^N$ of the MSBE with respect to $\theta$ be
\begin{equation}
\label{eq:semi_gradient_def}
g(\theta,\pi) := \int_{S \times A}(Q(s,a;\theta) - \mathrm{T}^{\pi}Q(s,a;\theta)) \phi(s,a) d_{\beta}^{\pi}(da,ds).
\end{equation}
The full $\argmin$ update in~\eqref{eq:solve_fully} is then replaced by
\begin{equation}
\theta^{n+1} = \theta^n -h_{n}g(\theta^{n},\pi^{n}),
\end{equation}
where timescale separation $\eta_n := \frac{h_n}{\lambda_n} > 1$ ensures that the critic is updated on a much faster timescale than the policy to improve the local estimation of the policy updates. 

With general action spaces, which allow the $\operatorname{KL}$ term may be unbounded, one may need to go even further and choose a scheme which does several (and possibly increasing) number of updates of the critic before doing an actor update.

In this paper we focus on a continuous-time idealisation of the above which is presented in the next section.

\section{Dynamics}

We study the stability and convergence of the two-timescale actor-critic Mirror Descent scheme in the continuous-time limit.
Let $\eta:[0,\infty) \to [1,\infty)$ be a non-decreasing function representing the timescale separation, then for some $\theta_0 \in \mathbb{R}^N$, $\pi_0 \in \Pi_{\mu}$ and $\beta \in \mathcal{P}(S \times A)$, we have the following coupled dynamics 
\begin{align}
\label{eq:dynamics_theta}
& \frac{d\theta_t}{dt} = -\eta_t g(\theta_t,\pi_t)\,,\,\,\,\theta_0 = \theta^0 \in \mathbb R^N,\\
\label{eq:fisherrao_dynamics}
& \partial_t\pi_t(da|s) = -A(s,a;\theta_t)\pi_t(da|s)\,,\,\,t\geq 0\,,\,\,\pi_0 = \pi^0 \in \Pi_\mu,
\end{align} 
where $g: \mathbb{R}^N \times \mathcal{P}(A|S)$ is the semi-gradient of the MSBE defined in \eqref{eq:semi_gradient_def}.
We refer to \eqref{eq:fisherrao_dynamics} as the approximate Fisher--Rao Gradient flow.

We perform our analysis under the following assumptions.
\begin{assumption}[$Q^{\pi}_{\tau}$-realisability]\label{as:linearmdp}
For all $\pi \in \Pi_{\mu}$ there exists $\theta_{\pi} \in \mathbb{R}^N$ such that $Q^{\pi}(s,a) = \inner{\theta_{\pi}}{\phi(s,a)}$ for all $(s,a)\in S \times A$.
\end{assumption}
A simple example of when this holds is in the tabular case, where one can choose $\phi$ to be a one-hot encoding of the state-action space. Moreover, all linear MDPs are $Q^{\pi}$-realisable. In a linear MDP there exists exists $\phi : S\times A \to \mathbb{R}^N$, $w\in \mathbb{R}^{N}$ and a sequence $\{\psi_i\}_{i=1}^{N}$ with $\psi_i \in \mathcal{M}(S)$ such that for all $(s,a)\in S \times A$,
\[
c(s,a)=\langle w,\phi(s,a)\rangle,
\qquad
P( d s'\mid s,a)=\sum_{i=1}^{N}\phi_i(s,a)\psi_i( d s').
\]In this case it holds that $(\theta_{\pi})_i = w_i + \int_{S} V^{\pi}(s')\psi_i(ds')$. Assumption \ref{as:linearmdp} can be seen as a convention to omit function approximation errors in the final convergence results. This assumption, or the presence of approximation errors in convergence results, are widely present in the actor-critic literature (\cite{cayci}, \cite{linear2}, \cite{linear3}, \cite{stochastic_approx_application_AC},\cite{qiu2021finite_approxerror1}). 

More recently, \cite{lin2025rethinkingglobalconvergencesoftmax} derives some weaker ordering conditions in the bandit case (empty state space) which guarantee the convergence of soft-max policy gradient in the tabular setting beyond realisability. However as of now it is unclear how this applies to MDPs and also fundamentally depends on the finite cardinality of the action space.

Since for all $\pi \in \Pi_\mu$ we know that $Q^\pi_\tau \in B_b(S\times A)$ we also have $Q^\pi_\tau \in L^2(S\times A; \beta)$, which is a Hilbert space.
By~\cite[Theorem 5.11]{brezis}, Assumption \ref{as:linearmdp} holds in the limit $N \to \infty$ when $\phi_i$ are the basis functions of $L^2(S\times A;\beta)$. However, analysis in such a Hilbert space becomes more involved and intricate and is the result of ongoing work. 
Combining this approach with careful truncation of the basis functions has demonstrated empirical success in \cite{ma2024skill_quadraptor, ren2023stochastic_truncation}.

\begin{assumption}\label{as:bounded_phi}
For all $(s,a)\in S \times A$ it holds that $|\phi(s,a)| \leq 1$. 
\end{assumption}
Assumption \ref{as:bounded_phi} is purely for convention and is without loss of generality in the finite-dimensional case.
\begin{assumption}\label{as:e_value}
Let $\beta \in \mathcal{P}(S \times A)$ be fixed. Then
\[
\lambda_{\beta} := \lambda_{\min}\left( \int_{S \times A} \phi(s,a)\phi(s,a)^{\top} \, \beta(ds\,da) \right) > 0.
\]
\end{assumption}
Note that unlike the analogous assumptions imposed in \cite{stochastic_approx_application_AC}, Assumption \ref{as:e_value} is independent of the policy. This property allows us to remove any dependence on the continuity of eigenvalues.

\begin{definition}
\label{def:L_loss}
For all $\pi \in \Pi_{\mu}$ and $\zeta \in \mathcal{P}(S \times A)$, the squared loss with respect to $\zeta$ is defined as
\begin{equation}
L(\theta,\pi;\zeta) = \frac{1}{2}\int_{S \times A} (\inner{\theta}{\phi(s,a)} - Q^{\pi}_\tau(s,a))^2 \zeta(da,ds)
\end{equation}where $Q^{\pi}_{\tau}$ is defined in \eqref{eq:Q_func}.
\end{definition}

A straightforward calculation given in Lemma \ref{lemma:strong_convexity} shows that due to Lemma \ref{lemma:occupancy_prop} and Assumption \ref{as:e_value}, for any $\pi \in \Pi_{\mu}$ it holds that $L(\cdot,\pi;d_{\beta}^{\pi})$ is $(1-\gamma)\lambda_{\beta}$-strongly convex. 

The following result then connects the geometry of the semi-gradient of the MSBE and the gradient of $L(\cdot,\pi;\beta)$, which can be seen as an extension of Lemma 3 of \cite{bhandari_td_learning} to the current entropy regularised setting.
\begin{lemma}\label{lemma:gradient}
Let Assumption \ref{as:linearmdp} hold. Then for all $\theta \in \mathbb{R}^N$ and $\pi \in \Pi_{\mu}$ it holds that 
\begin{equation}
-\inner{g(\theta,\pi)}{\theta-\theta_{\pi}} \leq -(1-\sqrt{\gamma})(1-\gamma)\inner{\nabla_{\theta}{L}(\theta,\pi;\beta)}{\theta-\theta_{\pi}}
\end{equation} with \[\nabla_{\theta} L(\theta,\pi;\beta) = \int_{S \times A} (\inner{\theta}{\phi(s,a)} - Q^{\pi}_{\tau}(s,a))\phi(s,a)\beta(da,ds).\]
\end{lemma}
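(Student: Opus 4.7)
The idea is to rewrite the semi-gradient $g(\theta,\pi)$ as the integral, against $d_\beta^\pi$, of the temporal-difference residual of the error function $f_\theta(s,a) := Q(s,a;\theta) - Q_\tau^\pi(s,a) = \inner{\theta-\theta_\pi}{\phi(s,a)}$, and then compare it against the gradient of $L(\cdot,\pi;\beta)$, which is the integral of $f_\theta \phi$ against $\beta$ itself.

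First I would exploit the Bellman fixed point identity $\mathrm{T}^\pi Q_\tau^\pi = Q_\tau^\pi$ together with the $Q^\pi_\tau$-realisability assumption. Since $\mathrm{T}^\pi$ is affine in its argument (the entropy and cost terms cancel when we take the difference) this gives
\begin{equation*}
Q(\cdot,\cdot;\theta) - \mathrm{T}^\pi Q(\cdot,\cdot;\theta) = f_\theta - \gamma P^\pi f_\theta,
\end{equation*}
where $(P^\pi f)(s,a) := \int_{S\times A} f(s',a') P^\pi(ds',da'|s,a)$. Plugging this into \eqref{eq:semi_gradient_def} and pairing with $\theta-\theta_\pi$ yields
\begin{equation*}
\inner{g(\theta,\pi)}{\theta-\theta_\pi} = \|f_\theta\|_{L^2(d_\beta^\pi)}^2 - \gamma \inner{P^\pi f_\theta}{f_\theta}_{L^2(d_\beta^\pi)}.
\end{equation*}

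The second key step is a Cauchy--Schwarz/Jensen estimate for the cross term. By Jensen applied to the probability kernel $P^\pi$ we have $(P^\pi f_\theta)^2 \le P^\pi(f_\theta^2)$, and then using the pushforward identity $\int P^\pi(f_\theta^2)\, d_\beta^\pi = \int f_\theta^2\, d(J_\pi d_\beta^\pi)$ together with the telescoping relation $\gamma J_\pi d_\beta^\pi = d_\beta^\pi - (1-\gamma)\beta$ (which follows directly from the geometric series definition of $d_\beta^\pi$ in \eqref{eq:occupancy}), I obtain
\begin{equation*}
\|P^\pi f_\theta\|_{L^2(d_\beta^\pi)}^2 \le \tfrac{1}{\gamma}\|f_\theta\|_{L^2(d_\beta^\pi)}^2 - \tfrac{1-\gamma}{\gamma}\|f_\theta\|_{L^2(\beta)}^2 \le \tfrac{1}{\gamma}\|f_\theta\|_{L^2(d_\beta^\pi)}^2.
\end{equation*}
Cauchy--Schwarz then gives $\gamma\inner{P^\pi f_\theta}{f_\theta}_{L^2(d_\beta^\pi)} \le \sqrt{\gamma}\,\|f_\theta\|_{L^2(d_\beta^\pi)}^2$, so
\begin{equation*}
\inner{g(\theta,\pi)}{\theta-\theta_\pi} \ge (1-\sqrt{\gamma})\,\|f_\theta\|_{L^2(d_\beta^\pi)}^2.
\end{equation*}

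Finally, since $d_\beta^\pi = (1-\gamma)\sum_{n\ge 0}\gamma^n J_\pi^n\beta$ dominates the $n=0$ term, we have $d_\beta^\pi \ge (1-\gamma)\beta$ as measures, hence $\|f_\theta\|_{L^2(d_\beta^\pi)}^2 \ge (1-\gamma)\|f_\theta\|_{L^2(\beta)}^2$. Combining with the previous bound and noting that $\inner{\nabla_\theta L(\theta,\pi;\beta)}{\theta-\theta_\pi} = \|f_\theta\|_{L^2(\beta)}^2$ gives exactly the claimed inequality after multiplying both sides by $-1$. The main obstacle is identifying the stationarity-like identity $\gamma J_\pi d_\beta^\pi = d_\beta^\pi - (1-\gamma)\beta$, which allows one to control $P^\pi$ in $L^2(d_\beta^\pi)$ even when $d_\beta^\pi$ is only the state-action occupancy rather than a true stationary measure; once this is in hand, everything else is a short Cauchy--Schwarz/Jensen computation.
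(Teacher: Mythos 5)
Your proposal is correct and follows essentially the same route as the paper: the same Bellman fixed-point decomposition $Q-\mathrm{T}^{\pi}Q = f_\theta - \gamma P^{\pi}f_\theta$, the same Jensen/Cauchy--Schwarz bound on the cross term yielding the $\sqrt{\gamma}$ factor (the paper packages this as Lemma \ref{lemma:main_integrals}, using H\"older on the joint measure $\kappa$ together with the identities of Lemma \ref{lemma:occupancy_prop}), and the same comparison $d_{\beta}^{\pi}\geq(1-\gamma)\beta$ to pass from $L^2(d_{\beta}^{\pi})$ to $L^2(\beta)$. If anything, your write-up is slightly more explicit in the final step, since the paper's last displayed line is stated in terms of $\nabla_{\theta}L(\theta,\pi;d_{\beta}^{\pi})$ and leaves the conversion to $\nabla_{\theta}L(\theta,\pi;\beta)$ with the extra $(1-\gamma)$ factor implicit.
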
 See Appendix \ref{sec:gradient_proof} for a proof.

\section{Stability}
\label{sec:stability}

In this section we analyse the stability of the coupled actor-critic flow.
Let $(\theta_t, \pi_t)_{t\geq 0}$ be given by the system~\eqref{eq:dynamics_theta}-\eqref{eq:fisherrao_dynamics}.
Under mild assumptions,  Corollary~\ref{corr:all_gamma_KL} shows that for all $s \in S$, $\operatorname{KL}(\pi_t(\cdot|s) |\mu)$ does not blow up in finite time in the sense that there in no $T > 0$ and no $s \in S$ such that $\lim_{t \nearrow T} \operatorname{KL}(\pi_t(\cdot|s) | \mu) = +\infty$. Existence of such a time $T>0$ would result in a singularity in the actor-critic dynamics.

Throughout this section, to ease notation we let 
\[
\Gamma := \lambda_{\beta}(1-\gamma)(1-\sqrt{\gamma}),\,\,\operatorname{K}_t := \sup_{s \in S} \operatorname{KL}(\pi_t(\cdot|s)|\mu),
\]
with $\lambda_{\beta} > 0$ the constant from Assumption \ref{as:e_value}.

Using Lemma \ref{lemma:occupancy_prop}, Lemma \ref{lemma:lyap_drift} then establishes the effect of the coupling and timescale separation in the actor-critic flow and its effect on the stability of the critic parameters. 

\begin{lemma}
\label{lemma:lyap_drift}
Let Assumptions \ref{as:bounded_phi} and \ref{as:e_value} hold. Then for all $t \geq 0$ it holds that
\begin{equation}
\frac{1}{2\eta_t}\frac{d}{dt}|\theta_t|^2 \leq -\frac{\Gamma}{2}\left|\theta_t\right|^2 + \frac{\tau^2\gamma^2 \mathrm{K}_t^2}{\Gamma} + \frac{ |c|_{B_b(S \times A)}^2 }{\Gamma}
\end{equation}
\end{lemma}
See Appendix \ref{sec:lyapononv_proof} for a proof.
By connecting the result from Lemma \ref{lemma:lyap_drift} with the approximate Fisher--Rao gradient flow, we are able to establish a Gr\"onwall-type inequality for the KL divergence of the policies with respect to the reference measure. 
Lemma~\ref{lemma:lyap_drift} also illustrates that the coupled actor-critic flow is a forcing-damping system, where the damping comes from the strong convexity of the loss $\theta \mapsto L(\theta,\pi_t;d_{\beta}^{\pi_t})$ and the forcing coming from the policy updates manifesting as the $\mathrm{K}_t$ term on the right-hand-side of the estimate. Here we can see that in the finite action space setting, the forcing $\mathrm{K}_t$ term is upper bounded by a constant and thus we can arrive at stability straight away. In the current setting this is not possible and we must perform analysis over $\mathcal{P}(A|S)$ on the approximate Fisher--Rao gradient flow to arrive at stability.

\begin{theorem}\label{Theorem:gronwall_KL}
Let Assumptions \ref{as:bounded_phi} and \ref{as:e_value} hold. Let $\eta_0 > \frac{\tau}{\Gamma}$. Then there exists constants \[a_1=a_1\left(\tau,\eta_0,\gamma,\lambda_{\beta}, |c|_{B_b(S \times A)},\left|\frac{d\pi_0}{d\mu} \right|_{B_{b}(S \times A)}\right) > 0\] and $a_2 = a_2(\tau,\eta_0,\gamma,\lambda_{\beta}) > 0$ such that for all $\gamma \in (0,1)$ and $t \geq 0$ it holds that
\begin{equation}
\mathrm{K}_t^2 \leq a_1 + a_2 \int_0^t e^{-\tau(t-r)}\mathrm{K}_r^2 \,dr.
\end{equation}
\end{theorem}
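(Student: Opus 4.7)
My plan is to combine two separate sources of exponential decay: the $\tau$-damping of the log-density $\ln(d\pi_t/d\mu)$ inherent in the Fisher--Rao flow due to the entropy regularisation, and the $\eta_0\Gamma$-damping of $|\theta_t|^2$ furnished by Lemma \ref{lemma:lyap_drift}. The hypothesis $\eta_0 > \tau/\Gamma$ makes the critic damping strictly faster than the actor damping, which is precisely what is needed for the two exponential kernels to convolve into a single integrable $e^{-\tau(t-r)}$ kernel.

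First I introduce $\ell_t(s,a) := \ln(d\pi_t/d\mu)(s,a)$. Since the approximate advantage $A_t$ has zero $\pi_t$-mean by construction, the flow \eqref{eq:fisherrao_dynamics} is normalisation preserving and yields $\partial_t \ell_t = -A_t$. Substituting \eqref{eq:approximate_advantage} gives the scalar linear ODE
\begin{equation*}
    \partial_t \ell_t(s,a) + \tau \ell_t(s,a) = -Q_t(s,a) + C_t(s), \qquad C_t(s) := \int_A (Q_t + \tau \ell_t)\,\pi_t(da|s),
\end{equation*}
so variation of constants gives $\ell_t(s,a) = e^{-\tau t}\ell_0(s,a) + \int_0^t e^{-\tau(t-r)}\bigl(-Q_r(s,a) + C_r(s)\bigr)dr$. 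Using Assumption \ref{as:bounded_phi} to bound $|Q_r|_\infty \leq |\theta_r|$ and hence $|C_r(s)| \leq |\theta_r| + \tau \mathrm{K}_r$, and observing that $\mathrm{K}_t \leq |\ell_t|_\infty$ since $\mathrm{KL}(\pi_t(\cdot|s)|\mu) = \int \ell_t\, d\pi_t(\cdot|s) \leq \sup_a \ell_t(s,a)$, I square the resulting supremum bound and apply a weighted Cauchy--Schwarz inequality (absorbing the $L^1$ norm of the kernel into a factor $1/\tau$) to obtain
\begin{equation*}
    \mathrm{K}_t^2 \leq 2 e^{-2\tau t}|\ell_0|_\infty^2 + \frac{16}{\tau}\int_0^t e^{-\tau(t-r)}|\theta_r|^2\,dr + 4\tau \int_0^t e^{-\tau(t-r)}\mathrm{K}_r^2\,dr.
\end{equation*}

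It remains to control $|\theta_r|^2$. Lemma \ref{lemma:lyap_drift} together with a Grönwall/variation-of-constants argument yields a bound of the form $|\theta_r|^2 \leq V_0\, e^{-\mu_0 r} + D_1 + D_2 \int_0^r e^{-\mu_0(r-u)} \mathrm{K}_u^2\,du$, with $\mu_0 := \eta_0 \Gamma$ and constants depending only on $\tau, \eta_0, \gamma, \lambda_\beta, |c|_{B_b(S\times A)}$. Plugging this into the previous inequality, swapping the order of integration in the resulting double integral, and using the convolution identity
\begin{equation*}
    \int_u^t e^{-\tau(t-r) - \mu_0(r-u)}\,dr \;=\; \frac{e^{-\tau(t-u)} - e^{-\mu_0(t-u)}}{\mu_0 - \tau} \;\leq\; \frac{e^{-\tau(t-u)}}{\mu_0 - \tau},
\end{equation*}
which is finite precisely because $\mu_0 > \tau$, i.e., $\eta_0 > \tau/\Gamma$, produces the desired Grönwall-type inequality, with $a_2$ depending only on $(\tau,\eta_0,\gamma,\lambda_\beta)$ and $a_1$ picking up the additional dependence on $|c|_{B_b(S\times A)}$ and on $|d\pi_0/d\mu|_{B_b(S\times A)}$ (through $|\ell_0|_\infty$ and $V_0$).

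The most delicate point is passing from the Lyapunov inequality of Lemma \ref{lemma:lyap_drift} to a clean bound on $|\theta_r|^2$ when $\eta_t$ is only non-decreasing and potentially unbounded: a direct integration of $V_r' \leq -\eta_r \Gamma V_r + (2\eta_r/\Gamma)(\tau^2\gamma^2 \mathrm{K}_r^2 + |c|^2)$ produces a factor $\eta_r$ inside the forcing integral, which would contaminate $a_1, a_2$ with $\sup_t \eta_t$. The resolution is to rewrite $\eta_r\, e^{-(\Psi(t)-\Psi(r))}\,dr = \Gamma^{-1}\,d_r\bigl[e^{-(\Psi(t)-\Psi(r))}\bigr]$, where $\Psi(t) := \int_0^t \Gamma \eta_s\,ds$, and either integrate by parts against $\mathrm{K}_r^2$ (using monotonicity of $\eta$ to control the boundary contribution) or invoke a comparison principle showing that $V_r$ is dominated by the solution of the Lyapunov ODE with constant coefficient $\eta \equiv \eta_0$; in both cases the key structural input is the lower bound $\Psi(t) - \Psi(r) \geq \mu_0(t-r)$, which is what couples the critic timescale to $\eta_0$ alone.
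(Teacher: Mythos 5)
The first half of your argument (Duhamel's formula for $\ln\frac{d\pi_t}{d\mu}$, the bound $\mathrm{K}_t \le \left|\ln\frac{d\pi_t}{d\mu}\right|_{B_b(S\times A)}$, then squaring and Cauchy--Schwarz against the kernel $e^{-\tau(t-r)}$) is sound and is essentially the paper's route, except that you retain the self-referential term $\tau\int_0^t e^{-\tau(t-r)}\mathrm{K}_r\,dr$, which after squaring contributes at least $3\tau$ to $a_2$. The theorem as stated tolerates this, but the paper cancels that term by also bounding $\operatorname{KL}(\mu\,|\,\pi_t(\cdot|s))$ and adding the two inequalities, which is what makes $a_2=O(\gamma^2)$; with your constants $a_2\ge 3\tau$ the downstream uniform-boundedness result (Corollary \ref{corr:small_gamma_KL}, which needs $a_2<\tau$) would be unreachable.

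The genuine gap is in the step you yourself flag as delicate: extracting from Lemma \ref{lemma:lyap_drift} a bound $|\theta_r|^2 \le V_0e^{-\mu_0 r}+D_1+D_2\int_0^r e^{-\mu_0(r-u)}\mathrm{K}_u^2\,du$ with $D_2$ free of $\sup_t\eta_t$. Writing $\Psi(r):=\Gamma\int_0^r\eta_s\,ds$, variation of constants leaves the forcing term $\tfrac{2\tau^2\gamma^2}{\Gamma}\int_0^r e^{-(\Psi(r)-\Psi(u))}\eta_u\mathrm{K}_u^2\,du$, and neither of your proposed fixes removes the factor $\eta_u$: integrating $\eta_u e^{-(\Psi(r)-\Psi(u))}=\Gamma^{-1}\partial_u e^{-(\Psi(r)-\Psi(u))}$ by parts against $\mathrm{K}_u^2$ produces $\int_0^r e^{-(\Psi(r)-\Psi(u))}\partial_u\mathrm{K}_u^2\,du$, a quantity of uncontrolled sign and regularity ($\mathrm{K}_u$ is a supremum over $s$), while the comparison with the constant-coefficient flow $\eta\equiv\eta_0$ fails in the direction you need: for $V_0=0$ and constant forcing $G$ the solution is $\tfrac{2G}{\Gamma}\bigl(1-e^{-\Psi(r)}\bigr)$, which is \emph{increasing} in $\eta$, so the true solution dominates the $\eta_0$-solution rather than the reverse. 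The paper sidesteps pointwise bounds on $|\theta_r|^2$ altogether: it rearranges the Lyapunov inequality into $|\theta_r|^2 \le -\tfrac{1}{\Gamma\eta_r}\tfrac{d}{dr}|\theta_r|^2 + \tfrac{2(|c|_{B_b(S\times A)}^2+\tau^2\gamma^2\mathrm{K}_r^2)}{\Gamma^2}$, multiplies by $e^{-\tau(t-r)}$, integrates in $r$, and integrates the derivative term by parts; the term $\tfrac{\tau}{\Gamma\eta_0}\int_0^t e^{-\tau(t-r)}|\theta_r|^2\,dr$ that this generates is absorbed into the left-hand side precisely because $\eta_0>\tau/\Gamma$, and the $\tfrac{d}{dr}\eta_r$ term has a favourable sign. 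That is where the hypothesis on $\eta_0$ actually enters --- not through the convolution of two exponential kernels --- and it is the step your outline does not supply.
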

See Appendix \ref{sec:gronwall_KL_sec} for a proof. Through applications of Gr\"onwall's Lemma (Lemma \ref{lemma:gronwall}), two direct corollaries of Theorem \ref{Theorem:gronwall_KL} show that the KL divergence of the policies with respect to the reference measure and the critic parameters do not blow up in finite time.
\begin{corollary}[Stability of $\pi_t$]
\label{corr:all_gamma_KL}
Under the same assumptions as Theorem \ref{Theorem:gronwall_KL}, for all $\gamma \in (0,1)$, $s \in S$ and $t \geq 0$ it holds that
\begin{equation}
\operatorname{KL}(\pi_t(\cdot|s)|\mu)^2 \leq a_1e^{a_2 t}.
\end{equation}
\end{corollary}

\begin{corollary}[Stability of $\theta_t$]
\label{corr:all_gamma_theta}
Under the same assumptions as Theorem \ref{Theorem:gronwall_KL}, suppose that there exists $\alpha > 0$ such that $\frac{d}{dt}\eta_t\leq \alpha\eta_t$. Then for all $\gamma \in (0,1)$ there exists $r_1,r_2 > 0$ such that for all $t \geq 0$ it holds that
\begin{equation}
|\theta_t| \leq r_1e^{r_2 t}.
\end{equation}
\end{corollary}

See Appendix \ref{sec:all_gamma_KL} and \ref{sec:all_gamma_theta} for the proofs.

If the MDP has sufficiently small effective time horizon due to a sufficiently small discounting factor and thus is in a sense regularised, the KL divergence of the policies with respect to the reference measure remains uniformly bounded along the flow, see
Corollaries \ref{corr:small_gamma_KL} and \ref{cor:small_gamma_theta}.

\section{Convergence}
In this section we will present final three key components before we get to the final convergence result for the coupled actor-critic flow.
First, we characterise the time derivative of the state-action value function along the approximate gradient flow for the policies. 
\begin{lemma}\label{lemma:Q_derivative}
For all $t\geq 0$ and $(s,a) \in S \times A$, it holds that
\begin{equation}
\frac{d}{dt}Q^{\pi_t}_{\tau}(s,a) = \frac{\gamma}{1-\gamma} \int_{S} \left( \int_{S \times A} A^{\pi_t}_{\tau}(s'',a'')\partial_t\pi_t(da''|s'')d^{\pi_t}(ds''|s')\right)P(ds'|s,a)
\end{equation}
\end{lemma}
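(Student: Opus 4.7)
The plan is to combine differentiation of the Bellman equation for $Q^{\pi_t}_\tau$ with an infinitesimal form of the performance difference lemma (Lemma \ref{lem:performance_diff}). Starting from \eqref{eq:Q_func}, since $c$ is time-independent, interchanging the derivative in $t$ with the integral against $P(\cdot|s,a)$ gives
\begin{equation*}
\frac{d}{dt} Q^{\pi_t}_\tau(s,a) = \gamma \int_S \frac{d}{dt} V^{\pi_t}_\tau(s') \, P(ds'|s,a),
\end{equation*}
reducing the problem to computing $\frac{d}{dt} V^{\pi_t}_\tau(s')$.

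To this end, I would apply Lemma \ref{lem:performance_diff} with $\rho = \delta_{s'}$, $\pi = \pi_{t+h}$, and $\pi' = \pi_t$, divide by $h$, and let $h \to 0$. The first (linear) term is expected to converge to
\begin{equation*}
\frac{1}{1-\gamma}\int_S \int_A \left(Q^{\pi_t}_\tau(s'',a'') + \tau \ln \frac{d\pi_t}{d\mu}(a'',s'') \right) \partial_t \pi_t(da''|s'') \, d^{\pi_t}(ds''|s'),
\end{equation*}
using continuity of $t \mapsto d^{\pi_t}(\cdot|s')$, while the KL term vanishes in the limit: the Fisher-Rao dynamics \eqref{eq:fisherrao_dynamics} give $\frac{d\pi_{t+h}}{d\pi_t}(a|s'') = 1 - h A_t(s'',a) + O(h^2)$, and a Taylor expansion of $x \ln x$ around $x = 1$ combined with the normalisation $\int_A A_t(s'',a)\pi_t(da|s'') = 0$ (which kills the $O(h)$ contribution) yields $\operatorname{KL}(\pi_{t+h}(\cdot|s'')|\pi_t(\cdot|s'')) = O(h^2)$.

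To convert the integrand into the soft advantage $A^{\pi_t}_\tau$, I would use that $V^{\pi_t}_\tau(s'')$ depends only on $s''$ while $\int_A \partial_t \pi_t(da''|s'') = 0$ (either by differentiating the mass constraint $\pi_t(A|s'') = 1$, or directly from $\int_A A_t(s'',a) \pi_t(da|s'') = 0$). Hence subtracting $V^{\pi_t}_\tau(s'')$ from the integrand does not alter its value, producing $A^{\pi_t}_\tau$, and substituting the resulting expression for $\frac{d}{dt} V^{\pi_t}_\tau(s')$ back into the derivative of the $Q$-Bellman equation yields the claimed identity.

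The main obstacle will be the rigorous passage to the limit $h \to 0$ under the nested integrals, in particular the continuity of the state-occupancy kernel $d^{\pi_t}$ in total variation with respect to $t$ and obtaining uniform dominated-convergence bounds. These ingredients are accessible from the Neumann-series representation of $d^{\pi_t}$ in \eqref{eq:occupancy_s} together with the stability bounds of the preceding section (e.g.\ Corollary \ref{corr:all_gamma_KL} which controls $\operatorname{KL}(\pi_t(\cdot|s)|\mu)$ and hence the regularity of $\pi_t$), but the bookkeeping remains the most delicate part of the argument.
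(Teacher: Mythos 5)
Your proposal is correct and follows the same skeleton as the paper's proof: differentiate the Bellman relation \eqref{eq:Q_func} to reduce the problem to computing $\frac{d}{dt}V^{\pi_t}_{\tau}(s')$, then substitute back under $P(ds'|s,a)$. The only divergence is that where the paper simply cites \cite{david_fisher}[Proof of Proposition 2.6] for the identity $\frac{d}{dt}V^{\pi_t}_{\tau}(s) = \frac{1}{1-\gamma}\int_{S\times A}A^{\pi_t}_{\tau}(s'',a'')\,\partial_t\pi_t(da''|s'')\,d^{\pi_t}(ds''|s)$, you re-derive it from the performance difference lemma (Lemma \ref{lem:performance_diff}); your two supporting observations — that $\operatorname{KL}(\pi_{t+h}(\cdot|s'')|\pi_t(\cdot|s''))=O(h^2)$ because the $O(h)$ term integrates to zero, and that $V^{\pi_t}_{\tau}(s'')$ can be inserted into the integrand since $\int_A\partial_t\pi_t(da''|s'')=0$ — are both correct. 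Your closing caveat about passing to the limit under $d^{\pi_{t+h}}(\cdot|s')$ identifies the genuinely delicate analytic step, which is precisely what the paper's citation absorbs.
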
 See Appendix \ref{sec:Q_function_derivative_proof} for a proof. Observe that in the exact setting, where $\partial_t \pi_t = -A^{\pi_t}_\tau$ as in~\eqref{eq:truefisherrao}, we obtain the dissipative property of $\{Q^{\pi_t}_{\tau} \}_{t \geq 0}$ along the flow
\[        \frac{d}{dt}Q^{\pi_t}_{\tau}(s,a) = \frac{-\gamma}{1-\gamma} \int_{S} \left( \int_{S \times A} A^{\pi_t}_{\tau}(s'',a'')^2d^{\pi_t}(ds''|s')\right)P(ds'|s,a) \leq 0.\]

Second, Theorem \ref{thm:convergence_1} shows that the actor-critic flow maintains the exponential convergence to the optimal policy induced by the $\tau$-regularisation up to a error term arising from not solving the critic to full accuracy.

\begin{theorem}\label{thm:convergence_1}Let $\left\{\pi_t,\theta_t\right\}_{t\geq 0}$ be the trajectories of the actor critic flow. Let Assumptions \ref{as:linearmdp} and \ref{as:bounded_phi} hold. Then for all $t > 0$ it holds that 
\begin{align}
\min_{r \in [0,t]} V^{\pi_r}_{\tau}(\rho)- V^{\pi^*}_{\tau}(\rho) &\leq \frac{\tau}{2(1-\gamma)(1-e^{-\frac{\tau}{2}t})} \Bigg(e^{-\frac{\tau}{2} t} \int_{S}\operatorname{KL}(\pi^*(\cdot|s)|\pi_0(\cdot|s))d_{\rho}^{\pi^*}(ds) \\
&\qquad+ \frac{1}{2\tau}\int_0^t e^{-\frac{\tau}{2}(t-r)} |\theta_r - \theta_{\pi_r}|^2 dr \Bigg)
\end{align}
\end{theorem}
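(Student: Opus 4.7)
The plan is to introduce the Lyapunov function $\mathcal{L}(t) := \int_S \operatorname{KL}(\pi^*(\cdot|s)\,|\,\pi_t(\cdot|s))\, d_\rho^{\pi^*}(ds)$, derive a Gronwall-type differential inequality for $\mathcal{L}$ that couples the value suboptimality $V^{\pi_t}_\tau(\rho) - V^{\pi^*}_\tau(\rho)$ to the critic error $|\theta_t - \theta_{\pi_t}|^2$, and then integrate against an exponential factor to extract the stated bound.

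First, since $\int_A A_t(s,a)\pi_t(da|s) = 0$, the approximate Fisher--Rao flow \eqref{eq:fisherrao_dynamics} gives $\partial_t \ln(d\pi_t/d\mu)(s,a) = -A_t(s,a)$, and differentiating under the integral yields
\[ \frac{d}{dt}\mathcal{L}(t) = \int_S \int_A A_t(s,a)\, \pi^*(da|s)\, d_\rho^{\pi^*}(ds). \]
I would split $A_t = A^{\pi_t}_\tau + (A_t - A^{\pi_t}_\tau)$. For the exact advantage, Lemma~\ref{lem:performance_diff} with $\pi = \pi^*$, $\pi' = \pi_t$, together with the Bellman optimality identity $Q^{\pi^*}_\tau + \tau \ln(d\pi^*/d\mu) = V^{\pi^*}_\tau$ from Theorem~\ref{thm:dynamics_programming} and the two cancellations $\int(\pi^*-\pi_t)(da|s) = 0$ and $\int A^{\pi_t}_\tau \pi_t(da|s) = 0$, yields
\[ \int_S\int_A A^{\pi_t}_\tau(s,a)\,\pi^*(da|s)\,d_\rho^{\pi^*}(ds) = (1-\gamma)\bigl(V^{\pi^*}_\tau(\rho) - V^{\pi_t}_\tau(\rho)\bigr) - \tau \mathcal{L}(t). \]

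For the approximation part, realisability (Assumption~\ref{as:linearmdp}) gives $Q(\cdot,\cdot;\theta_t) - Q^{\pi_t}_\tau = \langle \theta_t - \theta_{\pi_t}, \phi(\cdot,\cdot)\rangle$, and the $\pi_t$-mean contributions inside $A_t$ and $A^{\pi_t}_\tau$ cancel, leaving
\[ \int_S\int_A (A_t - A^{\pi_t}_\tau)\,\pi^*(da|s)\,d_\rho^{\pi^*}(ds) = \int_S \Bigl\langle \theta_t - \theta_{\pi_t},\, \int_A \phi(s,a)(\pi^* - \pi_t)(da|s) \Bigr\rangle d_\rho^{\pi^*}(ds). \]
Cauchy--Schwarz in $\mathbb{R}^N$, Assumption~\ref{as:bounded_phi} to bound $\bigl|\int_A \phi(s,a)(\pi^*-\pi_t)(da|s)\bigr|$ by $|\pi^*(\cdot|s)-\pi_t(\cdot|s)|_{\mathcal{M}(A)}$, Pinsker's inequality to convert this into $\sqrt{\operatorname{KL}(\pi^*(\cdot|s)|\pi_t(\cdot|s))}$, Jensen on the $d_\rho^{\pi^*}$-integral, and a Young's inequality tuned to split the contraction rate $\tau$ in half then produce a bound of the form $\frac{1}{2\tau}|\theta_t-\theta_{\pi_t}|^2 + \frac{\tau}{2}\mathcal{L}(t)$.

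Combining everything and using $V^{\pi^*}_\tau(\rho) \leq V^{\pi_t}_\tau(\rho)$ gives the Gronwall inequality
\[ \frac{d}{dt}\mathcal{L}(t) + \frac{\tau}{2}\mathcal{L}(t) \leq -(1-\gamma)\bigl(V^{\pi_t}_\tau(\rho) - V^{\pi^*}_\tau(\rho)\bigr) + \frac{1}{2\tau}|\theta_t - \theta_{\pi_t}|^2. \]
Multiplying by the integrating factor $e^{\tau t/2}$, integrating from $0$ to $t$, discarding the nonnegative $e^{\tau t/2}\mathcal{L}(t)$ on the left, replacing $V^{\pi_r}_\tau(\rho) - V^{\pi^*}_\tau(\rho)$ by its minimum over $[0,t]$ (which is nonnegative), and evaluating $\int_0^t e^{\tau r/2}dr = \frac{2}{\tau}(e^{\tau t/2}-1)$ then produces, after rearrangement, the claimed bound. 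The main obstacle is the Pinsker/Young balancing in the approximation step: the critic error must be translated into a quantity comparable to $\mathcal{L}(t)$ so that the entropic contraction $-\tau\mathcal{L}$ supplied by the performance difference lemma is only partially absorbed, leaving the effective decay rate $\tau/2$ and the coefficient $1/(2\tau)$; everything else is a mechanical application of the integrating factor.
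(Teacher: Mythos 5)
Your proposal is correct and follows essentially the same route as the paper: the paper starts from the performance-difference identity and adds zero in the form of the flow equation (identifying $\int_A \partial_t \ln\tfrac{d\pi_t}{d\mu}\,(\pi^*-\pi_t)(da|s) = -\tfrac{d}{dt}\operatorname{KL}(\pi^*(\cdot|s)|\pi_t(\cdot|s))$), whereas you differentiate the Lyapunov functional first and split $A_t = A^{\pi_t}_\tau + (A_t - A^{\pi_t}_\tau)$, but this is the same computation organised in a different order, and your Pinsker--Young treatment of the critic error and the integrating-factor step coincide with the paper's. The only caveat is constant bookkeeping in the Young/Pinsker step (the paper itself derives $\tfrac{1}{4\tau}$ in the differential inequality and reports $\tfrac{1}{2\tau}$ in the statement), which does not affect the structure of the argument.
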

See Appendix \ref{sec:conv1} for a proof. 
Theorem~\ref{thm:convergence_1} shows that the exponentially weighted error term determines the rate of convergence of the actor-critic dynamics.

Third, Theorem \ref{thm:main_thm} shows that this error term decays exponentially up to an integral which now depends on the rate of change of the true state-action value function and the timescale separation.

\begin{theorem}\label{thm:main_thm}Let Assumptions \ref{as:linearmdp}, \ref{as:bounded_phi} and \ref{as:e_value} hold. Let $\eta_0 > \frac{1}{\Gamma}$ and $0<\tau < 1$.
Then for all $t \geq 0$ there exists constants $b_1,b_2 > 0$ such that
\begin{align}
\int_0^t e^{-\frac{\tau}{2}(t-r)} |\theta_r - \theta_{\pi_r}|^2 dr \leq b_1e^{-\frac{\tau}{2}t}+ b_2\int_0^t e^{-\frac{\tau}{2}(t-r)}\frac{1}{\eta_r}\left|\frac{d}{dr}\theta_{\pi_r} \right|^2 dr.
\end{align}
\end{theorem}See Appendix \ref{sec:main_thm_proof} for a proof. 

Finally we are ready to present the main result of the paper.
Using Corollary~\ref{corr:all_gamma_KL} and by choosing $\eta_t$ such that the critic flows runs much faster than the actor, Theorem \ref{thm:all_gamma_conv} below demonstrates an exponential convergence to the optimal policy for all $\gamma \in (0,1)$.
\begin{theorem}\label{thm:all_gamma_conv}
Under the same assumptions as Theorem \ref{thm:main_thm}, there exists $k_1 > 0$ with $\eta_t = \eta_0 e^{k_1t}$ and $k_2 > 0$ such that for all $\gamma \in (0,1)$ and $t> 0$ it holds that 
\begin{align}
\min_{r \in [0,t]} V^{\pi_r}_{\tau}(\rho)- V^{\pi^*}_{\tau}(\rho) &\leq \frac{\tau e^{-\frac{\tau}{2} t}}{2(1-\gamma)(1-e^{-\frac{\tau}{2}t})} \Bigg( \int_{S}\operatorname{KL}(\pi^*(\cdot|s)|\pi_0(\cdot|s))d_{\rho}^{\pi^*}(ds) + \frac{k_2}{2\tau} \Bigg)
\end{align}
\end{theorem}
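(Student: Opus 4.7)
The plan is to start from Theorem~\ref{thm:convergence_1} and eliminate the $|\theta_r - \theta_{\pi_r}|^2$ term using Theorem~\ref{thm:main_thm}. Substituting the latter into the former yields
\begin{align*}
\min_{r \in [0,t]} V^{\pi_r}_\tau(\rho) - V^{\pi^*}_\tau(\rho) &\leq \frac{\tau e^{-\tau t/2}}{2(1-\gamma)(1-e^{-\tau t/2})}\int_S \operatorname{KL}(\pi^*(\cdot|s)|\pi_0(\cdot|s))\,d_\rho^{\pi^*}(ds) \\
&\quad + \frac{b_1 e^{-\tau t/2} + b_2\, I(t)}{4(1-\gamma)(1-e^{-\tau t/2})},
\end{align*}
where $I(t) := \int_0^t e^{-\tau(t-r)/2}\eta_r^{-1}|d\theta_{\pi_r}/dt|^2\,dr$. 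The stated bound follows once we show $I(t) \leq C e^{-\tau t/2}$ for some constant $C$, which reduces to choosing $\eta_t$ to grow exponentially faster than $|d\theta_{\pi_t}/dt|^2$.

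By $Q$-realisability (Assumption~\ref{as:linearmdp}) and positive definiteness of $M := \int \phi\phi^\top\,d\beta$ (Assumption~\ref{as:e_value}), one has $\theta_{\pi_t} = M^{-1}\int Q^{\pi_t}_\tau(s,a)\phi(s,a)\,\beta(ds,da)$; differentiating in $t$ and using Assumption~\ref{as:bounded_phi} yields $|d\theta_{\pi_t}/dt| \leq \lambda_\beta^{-1}|dQ^{\pi_t}_\tau/dt|_{B_b(S\times A)}$. Plugging the actor flow $\partial_t \pi_t = -A_t\pi_t$ into Lemma~\ref{lemma:Q_derivative} and taking sup-norms over the measure-valued kernels gives
\begin{equation*}
\left|\frac{dQ^{\pi_t}_\tau}{dt}\right|_{B_b(S\times A)} \leq \frac{\gamma}{1-\gamma}\,|A^{\pi_t}_\tau|_{B_b(S\times A)}\,|A_t|_{B_b(S\times A)},
\end{equation*}
so everything reduces to sup-norm bounds on the two advantages. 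Both decompose into $|\theta_t|$, $|Q^{\pi_t}_\tau|_{B_b(S\times A)}$, and the sup-norm $B_t := \left|\ln \frac{d\pi_t}{d\mu}\right|_{B_b(S\times A)}$. The first two grow at most exponentially in $t$ by Corollaries~\ref{corr:all_gamma_theta} and~\ref{corr:all_gamma_KL} (combined with $|Q^{\pi_t}_\tau|_{B_b(S\times A)} \leq (|c|_{B_b(S\times A)} + \gamma\tau\operatorname{K}_t)/(1-\gamma)$). For $B_t$ I exploit the Fisher--Rao identity $\partial_t \ln \frac{d\pi_t}{d\mu} = -A_t$: integrating in $t$ and bounding $|A_r|_{B_b(S\times A)} \leq 2|\theta_r| + 2\tau B_r$ yields
\begin{equation*}
B_t \leq B_0 + \int_0^t \left(2|\theta_r| + 2\tau B_r\right)\,dr,
\end{equation*}
which combined with Corollary~\ref{corr:all_gamma_theta} and Gronwall gives $B_t \leq C_0 e^{\sigma t}$ for some constants $C_0,\sigma$ depending only on $\tau,\gamma,\lambda_\beta,\eta_0,|c|_{B_b(S\times A)},\pi_0$. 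Assembling the pieces produces $|d\theta_{\pi_t}/dt|^2 \leq C_1 e^{4\sigma t}$.

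Setting $\eta_t = \eta_0 e^{k_1 t}$ with any $k_1 > \tau/2 + 4\sigma$, a direct integration gives $I(t) \leq C_1 e^{-\tau t/2}/[\eta_0(k_1 - \tau/2 - 4\sigma)]$, and substituting into the opening display yields the claim with $k_2 := b_1 + b_2 C_1/[\eta_0(k_1 - \tau/2 - 4\sigma)]$. The main technical obstacle will be the Gronwall step for $B_t$: while Corollary~\ref{corr:all_gamma_KL} only controls the $\pi_t$-integrated log-density, here we need the much stronger uniform bound in $(s,a)$. Extracting it relies on (i) linearity of the Fisher--Rao ODE in $\ln d\pi_t/d\mu$, which lets Gronwall close, and (ii) the fact that the exponents $a_2, r_2$ from Corollaries~\ref{corr:all_gamma_KL} and~\ref{corr:all_gamma_theta} do not themselves depend on $k_1$ once the fast-timescale dissipation in Lemma~\ref{lemma:lyap_drift} is exploited, so that the condition $k_1 > \tau/2 + 4\sigma$ is genuinely solvable.
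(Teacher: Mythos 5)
Your proposal follows essentially the same route as the paper's proof: substitute Theorem \ref{thm:main_thm} into Theorem \ref{thm:convergence_1}, bound $\left|\frac{d\theta_{\pi_t}}{dt}\right|$ via the least-squares formula for $\theta_{\pi_t}$, Lemma \ref{lemma:Q_derivative} and the sup-norm/stability estimates of Lemma \ref{lemma:bounds_along_flow} and Corollaries \ref{corr:all_gamma_KL}--\ref{corr:all_gamma_theta}, and then take $\eta_t=\eta_0 e^{k_1 t}$ with $k_1$ exceeding $\tau/2$ plus the resulting growth exponent. The only minor deviations are that you re-derive the uniform bound on $\ln\frac{d\pi_t}{d\mu}$ by a direct Gronwall argument rather than invoking the Duhamel-based bound already recorded in Lemma \ref{lemma:bounds_along_flow}, and that you explicitly flag the need for the growth exponent to be independent of $k_1$ so that the choice of $k_1$ is non-circular -- a point the paper leaves implicit.
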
 See Appendix \ref{sec:proof_of_all_gamma_conv} for a proof. 

Corollary \ref{thm:small_gamma_regime} then shows that if the MDP is sufficiently regularised through a small discounting factor, one can arrive at convergence for a much more general class of timescale separation functions $t\mapsto\eta_t$.

\section{Limitations}
In this work, we only study the continuous-time dynamics of the actor-critic algorithm. Although this formulation gives insights into the discrete counterpart, a rigorous treatment of the discrete-time setting is more realistic for practical purposes and is left for future research.

Moreover, for the purposes of analysis our critic approximation is linear while in practice non-linear neural networks are used to approximate the critic. 

Finally, our work assumes all integrals are evaluated exactly, in particular the semi-gradient~\eqref{eq:semi_gradient_def}. 
In practice these would need to be estimated from samples leading to additional Monte-Carlo errors.
To fully analyse this is left for future work.

\appendix

\section{Known properties MDPs and other useful results}
\label{sec:technical}

The state-occupancy kernel $d^{\pi} \in \mathcal{P}(S|S)$ is defined by 
\begin{equation}
\label{eq:occupancy_s}
d^{\pi}(ds'|s)=(1-\gamma)\sum_{n=0}^{\infty}\gamma^nP^n_{\pi}(ds'|s)\,,
\end{equation}
where $P^n_{\pi}$ is the $n$-times product of the kernel $P_{\pi}$ with $P^0_{\pi}(ds'|s)\coloneqq \delta_s(ds')$. Moreover, for each $\pi \in \mathcal{P}(A|S)$ and $(s,a) \in S \times A$, we define the state-action occupancy kernel as
\begin{equation}
d^{\pi}(ds,da|s,a) = (1-\gamma) \sum_{n=0}^{\infty} \gamma^n (P^{\pi})^n(ds,da|s,a)
\end{equation} where $(P^{\pi})^{n}$ is the $n$-times product of the kernel $P^{\pi}$ with $(P^{\pi})^{0}(ds',da'|s,a) := \delta_{(s,a)}(ds',da')$. Given some initial state-action distribution $\beta \in \mathcal{P}(S\times A)$ with initial state distribution given by $\rho(ds) = \int_{A} \beta(da,ds)$, we define the state-occupancy and state-action occupancy measures as
\begin{equation}\label{eq:occupancy}
d^{\pi}_{\rho}(ds) = \int_{S} d^{\pi}(ds|s')\rho(ds'), \quad d^{\pi}_{\beta}(ds,da) = \int_{S \times A}d^{\pi}(ds,da|s',a') \beta(da',ds').
\end{equation}
Note that for all $E \in \mathcal{B}(S \times A)$, by defining the linear operator $J_{\pi} : \mathcal{P}(S \times A) \to  \mathcal{P}(S \times A)$ as
\begin{equation}\label{eq:transition_operator}
J_{\pi}\beta(E) = \int_{S\times A} P^{\pi}(E|s',a')\beta(ds',da'),
\end{equation}it directly holds that
\begin{equation}
d^{\pi}_{\beta}(da,ds) = (1-\gamma)\sum_{n=0}^{\infty} \gamma^n J_{\pi}^n\beta(da,ds),
\end{equation}
with $J_{\pi}^{n}$ the $n$-fold product of the operator $J_{\pi}$ with $J_{\pi}^{0}=I$, the identity operator on $\mathcal{P}(S \times A)$. The following lemma establishes properties of the state-action occupancy measure defined in \eqref{eq:occupancy} and which are useful in the proofs.

\begin{lemma}\label{lemma:occupancy_prop} For all $\pi \in \mathcal{P}(A|S)$, $\beta \in \mathcal{P}(S \times A)$ and $E \in \mathcal{B}(S \times A)$ it holds that
\begin{equation}\label{eq:property1_occupancy}
d_{J^{\pi}\beta}^{\pi}(E) = J^{\pi}d_{\beta}^{\pi}(E).
\end{equation}
Moreover, for all $\gamma \in (0,1)$ we have
\begin{equation}\label{eq:property2_occupancy}
d_{\beta}^{\pi}(E)-\gamma d_{J^{\pi}\beta}^{\pi}(E) = (1-\gamma)\beta(E).
\end{equation}
\end{lemma}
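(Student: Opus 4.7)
The plan is to prove both identities by exploiting the geometric-series representation of the occupancy measure $d^{\pi}_{\beta}$ together with the linearity of the operator $J_{\pi} : \mathcal{P}(S\times A) \to \mathcal{P}(S\times A)$ defined in \eqref{eq:transition_operator}. Since $J_{\pi}$ acts linearly on $\mathcal{P}(S\times A)$ and the series $\sum_{n=0}^{\infty} \gamma^{n} J_{\pi}^{n}\beta$ converges in total variation (because $J_{\pi}$ is a Markov kernel and $\gamma \in (0,1)$), we may interchange $J_{\pi}$ with the infinite sum freely. This reduces both identities to an index shift in the geometric series, so no deeper machinery is needed.

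For the first identity \eqref{eq:property1_occupancy}, I would substitute $J_{\pi}\beta$ into the definition to obtain
\begin{equation*}
d^{\pi}_{J_{\pi}\beta}(E) = (1-\gamma)\sum_{n=0}^{\infty}\gamma^{n} J_{\pi}^{n}(J_{\pi}\beta)(E) = (1-\gamma)\sum_{n=0}^{\infty}\gamma^{n} J_{\pi}^{n+1}\beta(E) = J_{\pi}\Bigl((1-\gamma)\sum_{n=0}^{\infty}\gamma^{n} J_{\pi}^{n}\beta\Bigr)(E) = J_{\pi} d^{\pi}_{\beta}(E),
\end{equation*}
where the commutation of $J_{\pi}$ and the sum follows from its linearity and continuity on $\mathcal{M}(S\times A)$.

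For the second identity \eqref{eq:property2_occupancy}, I would apply \eqref{eq:property1_occupancy} and then shift the summation index to cancel the $n=0$ term. Explicitly,
\begin{equation*}
\gamma\, d^{\pi}_{J_{\pi}\beta}(E) = \gamma(1-\gamma)\sum_{n=0}^{\infty}\gamma^{n} J_{\pi}^{n+1}\beta(E) = (1-\gamma)\sum_{n=1}^{\infty}\gamma^{n} J_{\pi}^{n}\beta(E) = d^{\pi}_{\beta}(E) - (1-\gamma)J_{\pi}^{0}\beta(E),
\end{equation*}
and since $J_{\pi}^{0} = I$, rearranging yields $d^{\pi}_{\beta}(E) - \gamma d^{\pi}_{J_{\pi}\beta}(E) = (1-\gamma)\beta(E)$.

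Neither step presents a real obstacle; the only subtlety to check carefully is the justification of interchanging $J_{\pi}$ with the infinite sum, which follows immediately from $|J_{\pi}|_{\mathrm{op}} = 1$ on $(\mathcal{M}(S\times A), |\cdot|_{\mathcal{M}})$ and the absolute convergence of $\sum_{n} \gamma^{n}$. Thus the lemma is essentially a direct computation from the definition \eqref{eq:occupancy}.
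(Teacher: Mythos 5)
Your proof is correct and follows essentially the same route as the paper's: substitute $J_{\pi}\beta$ into the geometric-series definition, use linearity/associativity of $J_{\pi}$ for the first identity, and shift the summation index to isolate the $n=0$ term for the second. The only difference is that you explicitly justify interchanging $J_{\pi}$ with the infinite sum via $|J_{\pi}|_{\mathrm{op}}=1$, a point the paper passes over silently.
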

\begin{proof} For any $\beta \in \mathcal{P}(S \times A)$, $\pi \in \mathcal{P}(A|S)$ and $E \in \mathcal{B}(S \times A)$, it holds that
\begin{align}
d^{\pi}_{J_{\pi}\beta}(E) &= (1-\gamma)\sum_{n=0}^{\infty} \gamma^n (J_{\pi}^{n}J_{\pi}\beta)(E) \\
&= J_{\pi}d_{\beta}^{\pi}(E)
\end{align} where we just used the associativity of the operator $J_{\pi}$. Furthermore by letting $m=n+1$ it holds that 
\begin{align}
d_{J_{\pi}\beta}^{\pi}(E) &= (1-\gamma) \sum_{n=0}^{\infty} \gamma^n J_{\pi}^{n+1}\beta(E) \\
&= (1-\gamma)\sum_{m=1}^{\infty} \gamma^{m-1} J_{\pi}^{m}\beta(E) \\
&= \frac{1-\gamma}{\gamma}\sum_{m=1}^{\infty} \gamma^{m} J_{\pi}^{m}\beta(E) \\
&= \frac{1}{\gamma}(d_{\beta}^{\pi}(E) - (1-\gamma)\beta(E)).
\end{align}Rearranging concludes the proof.
\end{proof}

\begin{theorem}[Dynamic Programming Principle]
\label{thm:dynamics_programming}
Let $\tau > 0$. The optimal value function $V^{*}_{\tau}$ is the unique bounded solution of the following Bellman equation:
\[
V^{\ast}_{\tau}(s)=-\tau\ln\int_{A}\exp\left(-
\frac{1}{\tau}Q^{\ast}_{\tau}(s,a)\right)\mu(da),
\]
where $Q^*_{\tau}\in B_b(S\times A)$ is defined by  
\[
Q^{*}_{\tau}(s,a)=c(s,a)+\gamma\int_S V_{\tau}^{*}(s')P(ds'|s,a)\,,
\quad \forall (s,a)\in S\times A\,.
\]Moreover, there is an optimal policy $\pi^* \in \mathcal{P}(A|S)$  given by
\[
\label{eq:optimal_policy}
\pi^*(da|s) = \exp\left(-\frac{1}{\tau }(Q^{\ast}_{\tau}(s,a)-V^{\ast}_{\tau}(s))\right)\mu(da)\,,
\quad \forall s\in S.
\]Finally, the value function $V^{\pi}_{\tau}$ is the unique bounded solution of the following Bellman equation for all $s \in S$
\[
V^{\pi}_{\tau}(s)=\int_{A}\left(Q_\tau^\pi(s,a)+\tau \ln \frac{d \pi}{d \mu}(a,s)\right)\pi(da|s)\,.
\]
\end{theorem}
The performance difference lemma, first introduced for tabular unregularised MDPs, has become fundamental in the analysis of MDPs as it acts a substitute for the strong convexity of the $\pi \mapsto V^{\pi}_{\tau}$ if the state-occupancy measure $d_{\rho}^{\pi}$ is ignored (e.g \cite{kakade_PD}, \cite{jordan}, \cite{lan}). By virtue of \cite{david_fisher}, we have the following performance difference for entropy regularised MDPs in Polish state and action spaces.
\begin{lemma}[Performance difference]
\label{lem:performance_diff}
For all $\rho \in \mathcal{P}(S)$ and $\pi,\pi'\in \Pi_{\mu}$, 
\begin{align*}
&V^{\pi}_\tau(\rho)-V^{\pi'}_\tau(\rho) \\
&\quad = \frac{1}{1-\gamma}\int_S \bigg[\int_A\left(Q^{\pi'}_{\tau}(s,a)+\tau \ln \frac{d \pi'}{d\mu}(a,s)\right)(\pi-\pi')(da|s) + \tau    \operatorname{KL}(\pi(\cdot | s)|\pi'(\cdot | s)) \bigg]d^{\pi}_\rho(ds)\,.
\end{align*}
\end{lemma}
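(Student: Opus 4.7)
The plan is to prove the identity through a one-step Bellman expansion followed by iterating the resulting fixed-point equation against the state occupancy kernel. Throughout, the assumption $\pi,\pi'\in \Pi_{\mu}$ ensures that $\ln(d\pi'/d\mu)$ is bounded so that every integral encountered is finite and Fubini can be applied freely; this is the main technical point that justifies swapping sums and integrals later.

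First I would fix $s\in S$ and rewrite $V^{\pi'}_\tau(s)$ using the third Bellman identity in Theorem~\ref{thm:dynamics_programming}, namely
\[
V^{\pi'}_\tau(s)=\int_A\!\left(Q^{\pi'}_\tau(s,a)+\tau\ln\frac{d\pi'}{d\mu}(a,s)\right)\pi'(da|s).
\]
Add and subtract the same integrand evaluated against $\pi(\cdot|s)$ instead of $\pi'(\cdot|s)$ to obtain
\[
V^{\pi}_\tau(s)-V^{\pi'}_\tau(s)=\int_A\!\left(Q^{\pi'}_\tau(s,a)+\tau\ln\frac{d\pi'}{d\mu}(a,s)\right)(\pi-\pi')(da|s) + R(s),
\]
where $R(s):=V^{\pi}_\tau(s)-\int_A(Q^{\pi'}_\tau(s,a)+\tau\ln(d\pi'/d\mu)(a,s))\pi(da|s)$. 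Using the Bellman identity for $V^{\pi}_\tau$ itself, the $\tau\ln(d\pi/d\mu)$ piece combines with $-\tau\ln(d\pi'/d\mu)$ to yield exactly $\tau\operatorname{KL}(\pi(\cdot|s)|\pi'(\cdot|s))$, while the $Q$-pieces collapse via \eqref{eq:Q_func} into
\[
\int_A\bigl(Q^{\pi}_\tau(s,a)-Q^{\pi'}_\tau(s,a)\bigr)\pi(da|s)=\gamma\int_S\bigl(V^{\pi}_\tau(s')-V^{\pi'}_\tau(s')\bigr)P_\pi(ds'|s).
\]

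Writing $g(s):=V^{\pi}_\tau(s)-V^{\pi'}_\tau(s)$ and
\[
h(s):=\int_A\!\left(Q^{\pi'}_\tau(s,a)+\tau\ln\frac{d\pi'}{d\mu}(a,s)\right)(\pi-\pi')(da|s) + \tau\operatorname{KL}(\pi(\cdot|s)|\pi'(\cdot|s)),
\]
the previous step gives the affine fixed-point relation $g(s)=h(s)+\gamma\int_S g(s')P_\pi(ds'|s)$. Iterating this relation $N$ times and integrating against $\rho$ yields
\[
\int_S g\,d\rho=\sum_{n=0}^{N-1}\gamma^n\int_S h\,d(P_\pi^n\rho)+\gamma^N\int_S g\,d(P_\pi^N\rho).
\]
Since $V^{\pi}_\tau,V^{\pi'}_\tau\in B_b(S)$ (which is where the admissibility hypothesis and boundedness of $c$ enter), the remainder term vanishes as $N\to\infty$, and the geometric series reassembles into the occupancy kernel $d^{\pi}_\rho$ via its definition \eqref{eq:occupancy_s}-\eqref{eq:occupancy}, giving exactly the claimed formula.

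The main obstacle is not the algebra but the measure-theoretic bookkeeping: one must ensure that every integrand is in $L^1$ of the appropriate measure so the manipulations are legal. Here the class $\Pi_{\mu}$ is crucial, since it bounds $\ln(d\pi/d\mu)$ uniformly and in turn bounds both the KL term and $V^{\pi}_\tau$, making the tail estimate $\gamma^N\int g\,d(P_\pi^N\rho)\to 0$ immediate. With that taken care of, the rest is essentially a telescoping of the entropy-adjusted Bellman operator.
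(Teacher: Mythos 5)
Your proposal is correct. The paper does not prove this lemma itself but imports it from the cited reference \cite{david_fisher}, and your argument is the standard one for performance-difference identities: a one-step decomposition using the Bellman identity $V^{\pi'}_\tau(s)=\int_A(Q^{\pi'}_\tau(s,a)+\tau\ln\tfrac{d\pi'}{d\mu}(a,s))\pi'(da|s)$, recombination of the entropy terms into $\tau\operatorname{KL}(\pi(\cdot|s)|\pi'(\cdot|s))$, and resummation of the affine fixed-point relation $g=h+\gamma P_\pi g$ into the occupancy kernel $d^\pi_\rho$, with the tail $\gamma^N\int g\,d(P_\pi^N\rho)$ vanishing because $g$ is bounded. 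Your attention to why $\Pi_\mu$ guarantees boundedness of $\ln(d\pi/d\mu)$, finiteness of the KL terms, and hence the legitimacy of Fubini and the telescoping is exactly the right technical point; the derivation is a valid self-contained substitute for the citation.
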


\begin{lemma}[Gr\"onwall]\label{lemma:gronwall}
Let $\lambda(s) \geq 0$, $a = a(s)$, $b = b(s)$ and $y = y(s)$ be locally integrable, real-valued functions defined on $[0,T]$ such that $y$ is also locally integrable and for almost all $s \in [0,T]$,
\[
y(s) + a(s) \leq b(s) + \int_0^s \lambda(t) y(t) dt.
\]
Then
\[
y(s) + a(s) \leq b(s) + \int_0^s \lambda(t) \left[ \int_0^t \lambda(r) (b(r) - a(r)) dr \right] dt, \quad \forall s \in [0,T].
\]

Furthermore, if $b$ is monotone increasing and $a$ is non-negative, then
\[
y(s) + a(s) \leq b(s) e^{\int_0^s \lambda(r) dr}, \quad \forall s \in [0,T].
\]
\end{lemma}

\section{Auxiliary results}

\begin{lemma}\label{lemma:main_integrals}
For some $\beta \in \mathcal{P}(S \times A)$, let $d_{\beta}^{\pi} \in \mathcal{P}(S \times A)$ be the state-action occupancy measure. Moreover let $\kappa(ds,da,ds',da') := P^{\pi}(ds',da'|s,a)d_{\beta}^{\pi}(ds,da)$. Then for any $\pi \in \Pi_{\mu}$ and any integrable $ f : S \times A \to \mathbb{R}$, it holds that 
\begin{equation}
\int_{S \times A \times S \times A} f(s,a)f(s',a') \kappa(ds,da,ds',da') \leq \frac{1}{\sqrt{\gamma}} \int_{S \times A} f(s,a)^2 d_{\beta}^{\pi}(ds,da)
\end{equation}
\end{lemma}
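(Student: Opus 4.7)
The plan is to apply Cauchy--Schwarz to decouple the product $f(s,a)f(s',a')$ under the measure $\kappa$, and then use the two properties of the state-action occupancy measure from Lemma \ref{lemma:occupancy_prop} to bound each resulting factor.

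First I would write
\[
\int_{(S\times A)^2} f(s,a)f(s',a')\,\kappa(ds,da,ds',da') \leq \left(\int_{(S\times A)^2} f(s,a)^2\,\kappa\right)^{1/2}\left(\int_{(S\times A)^2} f(s',a')^2\,\kappa\right)^{1/2}.
\]
The first factor is straightforward: since $P^{\pi}(\cdot|s,a)$ is a probability kernel on $S\times A$, integrating out $(s',a')$ gives
\[
\int_{(S\times A)^2} f(s,a)^2\,\kappa(ds,da,ds',da') = \int_{S\times A} f(s,a)^2\,d_{\beta}^{\pi}(ds,da).
\]

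For the second factor, integrating out $(s,a)$ and recalling the definition \eqref{eq:transition_operator} of $J_{\pi}$ gives
\[
\int_{(S\times A)^2} f(s',a')^2\,\kappa(ds,da,ds',da') = \int_{S\times A} f(s',a')^2\,J_{\pi}d_{\beta}^{\pi}(ds',da').
\]
Here I would invoke Lemma \ref{lemma:occupancy_prop}: property \eqref{eq:property1_occupancy} identifies $J_{\pi}d_{\beta}^{\pi} = d_{J_{\pi}\beta}^{\pi}$, and property \eqref{eq:property2_occupancy} applied to $J_{\pi}\beta$ in place of $\beta$ (or rearranging the stated identity) yields $d_{\beta}^{\pi} - \gamma J_{\pi}d_{\beta}^{\pi} = (1-\gamma)\beta \geq 0$ as a signed measure. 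Consequently $J_{\pi}d_{\beta}^{\pi} \leq \gamma^{-1} d_{\beta}^{\pi}$ as measures, so
\[
\int_{S\times A} f(s',a')^2\,J_{\pi}d_{\beta}^{\pi}(ds',da') \leq \frac{1}{\gamma}\int_{S\times A} f(s',a')^2\,d_{\beta}^{\pi}(ds',da').
\]

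Combining the two factors, the Cauchy--Schwarz bound becomes
\[
\left(\int_{S\times A} f^2\,d_{\beta}^{\pi}\right)^{1/2}\left(\frac{1}{\gamma}\int_{S\times A} f^2\,d_{\beta}^{\pi}\right)^{1/2} = \frac{1}{\sqrt{\gamma}}\int_{S\times A} f(s,a)^2\,d_{\beta}^{\pi}(ds,da),
\]
which is exactly the claimed inequality. The only non-routine step is recognising that the measure-domination $J_{\pi}d_{\beta}^{\pi} \leq \gamma^{-1} d_{\beta}^{\pi}$ is the right way to extract the $\gamma^{-1/2}$ factor; once property \eqref{eq:property2_occupancy} of Lemma \ref{lemma:occupancy_prop} is in hand this is essentially immediate, so I expect no serious obstacle.
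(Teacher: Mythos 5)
Your proposal is correct and follows essentially the same route as the paper: Cauchy--Schwarz to split the product, marginalising $\kappa$ to identify the two factors as integrals against $d_{\beta}^{\pi}$ and $J_{\pi}d_{\beta}^{\pi}=d_{J_{\pi}\beta}^{\pi}$ via \eqref{eq:property1_occupancy}, and then the domination $J_{\pi}d_{\beta}^{\pi}\leq \gamma^{-1}d_{\beta}^{\pi}$ from \eqref{eq:property2_occupancy}. No gaps.
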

\begin{proof}
By Hölder's inequality, it holds that
\begin{align}\label{eq:holder}
&\int_{S \times A \times S \times A} f(s,a)f(s',a') \kappa(ds,da,ds',da')  \\
&\leq \left(\int_{S \times A \times S \times A} f(s,a)^2 \kappa(ds,da,ds',da')\right)^{\frac{1}{2}} \left(\int_{S \times A \times S \times A} f(s',a')^2 \kappa(ds,da,ds',da')\right)^{\frac{1}{2}}. 
\end{align} Moreover, observe that
\begin{align}
\int_{S \times A \times S \times A} f(s,a)^2 \kappa(ds,da,ds',da') &= \int_{S \times A} \left(\int_{S \times A} P^{\pi}(ds',da'|s,a)  \right) f(s,a)^2d_{\beta}^{\pi}(ds,da) \\
&=  \int_{S \times A} f(s,a)^2d_{\beta}^{\pi}(ds,da),
\end{align}hence \eqref{eq:holder} becomes
\begin{align}
&\left(\int_{S \times A \times S \times A} f(s,a)^2 \kappa(ds,da,ds',da')\right)^{\frac{1}{2}} \left(\int_{S \times A \times S \times A} f(s',a')^2 \kappa(ds,da,ds',da')\right)^{\frac{1}{2}}\\
&\leq \left(  \int_{S \times A} f(s,a)^2d_{\beta}^{\pi}(ds,da)\right)^{\frac{1}{2}} \left(\int_{S \times A \times S \times A} f(s',a')^2 \kappa(ds,da,ds',da')\right)^{\frac{1}{2}}.
\end{align}Now by the first part of Lemma \ref{lemma:occupancy_prop}, it holds that 
\begin{align}
\int_{S \times A \times S \times A} f(s',a')^2 \kappa(ds,da,ds',da') &= \int_{S \times A \times S \times A} f(s',a')^2P^{\pi}(ds',da'|s,a)d_{\beta}^{\pi}(ds,da) \\
&= \int_{S \times A} f(s,a)^2 d_{J^{\pi}\beta}^{\pi}(ds,da),
\end{align}where $J^{\pi} : \mathcal{P}(S \times A) \to \mathcal{P}(S \times A)$ is defined in \eqref{eq:transition_operator}. Then by the second part of Lemma \ref{lemma:occupancy_prop} we have
\begin{align}
&\left(  \int_{S \times A} f(s,a)^2d_{\beta}^{\pi}(ds,da)\right)^{\frac{1}{2}} \left(\int_{S \times A \times S \times A} f(s',a')^2 \kappa(ds,da,ds',da')\right)^{\frac{1}{2}} \\
&\leq \left(  \int_{S \times A} f(s,a)^2d_{\beta}^{\pi}(ds,da)\right)^{\frac{1}{2}}\left(\int_{S \times A} f(s,a)^2 d_{J^{\pi}\beta}^{\pi}(ds,da) \right)^{\frac{1}{2}} \\
&\leq \frac{1}{\sqrt{\gamma}} \int_{S \times A} f(s,a)^2 d_{\beta}^{\pi}(ds,da),
\end{align}which concludes the proof.
\end{proof}

To alleviate notation let $Q_t(s,a) := Q(s,a;\theta_t)$ and $A_t(s,a) := A(s,a;\theta_t)$.

\begin{lemma}\label{lemma:bounds_along_flow}
For some $\theta_0 \in \mathbb{R}^N$ and $\pi_0 \in \Pi_{\mu}$, let $\{\pi_t,\theta_t\}_{t\geq 0}$ be the trajectory of coupled actor-critic flow. Moreover let $\mathrm{K}_t = \sup_{s \in S} \operatorname{KL}(\pi_t(\cdot|s)|\mu)$. There exists $C_1 > 0 $ such that for all $t \geq 0$ it holds that
\begin{align}
&\sup_{s \in S} \left| \partial_t \pi_t(\cdot|s)\right|_{\mathcal{M}(A)} \leq \left| A_t \right|_{B_b(S\times A)}, \\
&\left| A_t\right|_{B_b(S\times A)} \leq 2\left| Q_t\right|_{B_b(S\times A)} + 2 \tau\left| \ln \frac{d\pi_t}{d\mu}\right|_{B_b(S\times A)},\\  
&\left| Q^{\pi_t}_{\tau} \right|_{B_b(S\times A)}\leq \frac{1}{1-\gamma}\left( \left|c \right|_{B_b(S\times A)} + \tau\gamma\mathrm{K}_t \right), \\
&\left| \ln \frac{d\pi_t}{d\mu}\right|_{B_b(S\times A)} \leq C_1 + \frac{2}{\tau} \sup_{r \in [0,t]} |\theta_r| + \sup_{r \in [0,t]}\mathrm{K}_r.
\end{align}
\end{lemma}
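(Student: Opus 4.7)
\textbf{Proof plan for Lemma \ref{lemma:bounds_along_flow}.}

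The strategy is to prove the four bounds in order, since each feeds into the next. The first three are essentially triangle inequalities using the definitions, while the fourth is the substantive step and requires recognising that the log-density satisfies a linear ODE.

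For the first bound, I would note that $\partial_t \pi_t(da|s) = -A_t(s,a)\pi_t(da|s)$ is absolutely continuous with respect to $\pi_t(\cdot|s)$, hence its total variation is $\int_A |A_t(s,a)|\pi_t(da|s)$, which is dominated by $|A_t|_{B_b(S\times A)}$ uniformly in $s$. For the second bound, I would apply the triangle inequality to the definition \eqref{eq:approximate_advantage}: each of the two $\pi_t$-integrals inside the sup is bounded by $|Q_t|_{B_b(S\times A)}$ and $\tau|\ln\tfrac{d\pi_t}{d\mu}|_{B_b(S\times A)}$ respectively, giving the factor of $2$. For the third bound, I would use the probabilistic representation of $V^{\pi}_\tau$, bound the KL-term in the integrand by $\mathrm{K}_t$ to obtain $|V^{\pi_t}_\tau|_{B_b(S)} \leq \tfrac{1}{1-\gamma}(|c|_{B_b(S\times A)} + \tau \mathrm{K}_t)$, and then substitute into \eqref{eq:Q_func} and simplify to obtain the advertised factor of $\tau\gamma \mathrm{K}_t$.

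The main obstacle is the fourth bound, since the crude estimate obtained from integrating $\partial_t \ln\tfrac{d\pi_t}{d\mu}(s,a) = -A_t(s,a)$ would only yield a growing-in-$t$ bound. The clever step is to write the Fisher--Rao flow \eqref{eq:fisherrao_dynamics} at the level of the log-density. Setting $g_t(s,a) := \ln\tfrac{d\pi_t}{d\mu}(s,a)$ and $\Phi_t(s) := \int_A \bigl(Q_t(s,a') + \tau g_t(s,a')\bigr)\pi_t(da'|s)$, the definition \eqref{eq:approximate_advantage} and the dynamics for $\pi_t$ yield the pointwise linear ODE
\begin{equation*}
\partial_t g_t(s,a) = -\tau g_t(s,a) - Q_t(s,a) + \Phi_t(s).
\end{equation*}
The crucial observation is that entropic regularisation provides a linear restoring force with rate $\tau$. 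Solving explicitly,
\begin{equation*}
g_t(s,a) = e^{-\tau t} g_0(s,a) + \int_0^t e^{-\tau(t-r)}\bigl(\Phi_r(s) - Q_r(s,a)\bigr) dr.
\end{equation*}
I would then bound $|Q_r(s,a)| \leq |\theta_r|$ via Assumption \ref{as:bounded_phi}, and $|\Phi_r(s)| \leq |Q_r|_{B_b(S\times A)} + \tau \mathrm{K}_r \leq |\theta_r| + \tau \mathrm{K}_r$, so that $|\Phi_r(s) - Q_r(s,a)| \leq 2|\theta_r| + \tau \mathrm{K}_r$. Taking the supremum over $(s,a)$ and pulling the suprema of $|\theta_r|$ and $\mathrm{K}_r$ outside the integral, the geometric factor $\int_0^t e^{-\tau(t-r)} dr = \tfrac{1}{\tau}(1-e^{-\tau t}) \leq \tfrac{1}{\tau}$ produces the coefficient $\tfrac{2}{\tau}$ in front of $\sup_{r\in[0,t]}|\theta_r|$ and a constant $\leq 1$ in front of $\sup_{r\in[0,t]} \mathrm{K}_r$, with the initial contribution $e^{-\tau t}|g_0|_{B_b(S\times A)}$ absorbed into $C_1 := |\ln\tfrac{d\pi_0}{d\mu}|_{B_b(S\times A)}$, which is finite since $\pi_0 \in \Pi_\mu$. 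This completes the proof.
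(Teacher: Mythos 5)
Your proposal is correct and follows essentially the same route as the paper: the first two bounds by direct estimation from the definitions, the third by exploiting the geometric contraction in $\gamma$ (the paper rearranges the Bellman fixed-point identity in sup norm, while you pass through the series representation of $V^{\pi_t}_\tau$ — the two are equivalent and give the same constant), and the fourth via exactly the paper's key step of writing the flow for $\ln\tfrac{d\pi_t}{d\mu}$ as a linear ODE with restoring rate $\tau$ and applying Duhamel's principle, with $|Q_r(s,a)|\le|\theta_r|$ from Assumption \ref{as:bounded_phi} and $\int_0^t e^{-\tau(t-r)}dr\le\tfrac{1}{\tau}$ producing the stated coefficients. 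No gaps.
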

\begin{proof} 
The first claim $\sup_{s \in S} \left| \partial_t \pi_t(\cdot|s)\right|_{\mathcal{M}(A)} \leq \left| A^{\pi_t}_{\tau} \right|_{B_b(S\times A)}$ follows trivially from the definition of the approximate Fisher--Rao gradient flow defined in \eqref{eq:fisherrao_dynamics}. Moreover, it holds that
\begin{align}
\left| A_t \right|_{B_b(S\times A)} &= \left| Q_t + \tau \ln \frac{d\pi_t}{d\mu} - \int_{A} \left(Q_t(\cdot,a) + \tau \ln \frac{d\pi_t}{d\mu}(\cdot,a) \right) \pi_t(da|\cdot) \right|_{B_b(S\times A)} \\
&\leq 2\left|Q_t + \tau \ln \frac{d\pi_t}{d\mu} \right|_{B_b(S\times A)}  \\
&\leq 2\left|Q_t \right|_{B_b(S\times A)} + 2\tau \left|\ln \frac{d\pi_t}{d\mu} \right|_{B_b(S\times A)} 
\end{align}where we used the triangle inequality in the final inequality. Moreover, the state-action value function $Q^{\pi_t}_{\tau}$ is a fixed point of the Bellman operator defined in \eqref{eq:bellman_operator_Def}. Hence, for all $(s,a) \in S \times A$, we have
\begin{align}
Q^{\pi_t}_{\tau}(s,a) 
&= c(s,a) 
+ \gamma \int_{S \times A} Q^{\pi_t}_{\tau}(s',a') \, P^{\pi_t}(ds',da'|s,a) 
+ \tau \gamma \int_{S} \operatorname{KL}(\pi_t(\cdot|s') \| \mu) \, P(ds'|s,a).
\end{align}
Taking absolute values and using the triangle inequality we have
\begin{align}
\left| Q^{\pi_t}_{\tau}(s,a) \right|
&\leq \left| c \right|_{B_b(S\times A)}
+ \gamma \left| Q^{\pi_t}_{\tau} \right|_{B_b(S\times A)}
+ \tau\gamma \sup_{s' \in S} \operatorname{KL}(\pi_t(\cdot|s') \| \mu) \\
&= \left| c \right|_{B_b(S\times A)} + \gamma \left| Q^{\pi_t}_{\tau} \right|_{B_b(S\times A)} + \tau\gamma \mathrm{K}_t.
\end{align}
Taking the supremum over $(s,a) \in S\times A$ on the left-hand side yields
\begin{equation}
\left| Q^{\pi_t}_{\tau} \right|_{B_b(S\times A)} 
\leq \left| c \right|_{B_b(S\times A)} + \gamma \left| Q^{\pi_t}_{\tau} \right|_{B_b(S\times A)} + \tau\gamma \mathrm{K}_t.
\end{equation}
Rearranging gives
\begin{equation}
(1-\gamma)\left| Q^{\pi_t}_{\tau} \right|_{B_b(S\times A)} 
\leq \left| c \right|_{B_b(S\times A)} + \tau\gamma \mathrm{K}_t,
\end{equation}
which is the desired bound. Recall the approximate Fisher--Rao gradient flow for the policies $\{\pi_t\}_{t \geq 0}$, which for all $t \geq 0$ and for all $(s,a) \in S \times A$ is given by
\begin{equation}
\partial_t \ln \frac{d\pi_t}{d\mu}(s,a) 
= -\left( Q_t(s,a) + \tau \ln\frac{d\pi_t}{d\mu}(a,s) 
- \int_{A} \left(Q_t(s,a') + \tau \ln\frac{d\pi_t}{d\mu}(a',s)\right)\pi_t(da'|s)\right).
\end{equation}
Duhamel's principle yields for all $t \geq 0$ that
\begin{align}\label{eq:duhamel}
\ln \frac{d\pi_t}{d\mu}(s,a) 
&= e^{-\tau t}\ln \frac{d\pi_0}{d\mu}(a,s) 
+ \int_0^t e^{-\tau(t-r)} \left( \int_{A} Q_r(s,a')\pi_r(da'|s) - Q_r(s,a) \right) dr \\
&\quad + \tau\int_0^t e^{-\tau(t-r)} \operatorname{KL}(\pi_{r}(\cdot|s) | \mu) dr.
\end{align}
Since $\pi_0 \in \Pi_{\mu}$, there exists $C_1 \geq 1$ such that $\left| \ln \frac{d\pi_0}{d \mu} \right|_{B_b(S \times A)} \leq C_1$.
Then by Assumption~\ref{as:bounded_phi} we have that for all $t \geq 0$,
\begin{align}
\left| \ln \frac{d\pi_t}{d\mu}(s,a) \right| 
&\leq C_1 
+ \int_0^t e^{-\tau(t-r)} \left| \int_{A} Q_r(s,a')\pi_r(da'|s) - Q_r(s,a) \right| dr \\
&\quad + \tau \int_0^t e^{-\tau(t-r)} \operatorname{KL}(\pi_r(\cdot|s) \| \mu) \, dr \\
&\leq C_1 
+ 2\int_0^t e^{-\tau(t-r)} \left| \theta_r \right| dr 
+ \tau \int_0^t e^{-\tau(t-r)} \mathrm{K}_r \, dr \\
&\leq C_1 + \frac{2}{\tau} \sup_{r \in [0,t]} \left| \theta_r \right| 
+ \sup_{r \in [0,t]} \mathrm{K}_r,
\end{align}
where in the last inequality we used $\int_0^t e^{-\tau(t-r)} dr \leq \frac{1}{\tau}$.  
Taking the supremum over $(s,a) \in S \times A$ yields
\begin{equation}
\left| \ln \frac{d\pi_t}{d\mu} \right|_{B_b(S \times A)} 
\leq C_1 + \frac{2}{\tau} \sup_{r \in [0,t]} \left| \theta_r \right| 
+ \sup_{r \in [0,t]} \mathrm{K}_r,
\end{equation}
which is the desired bound.
\end{proof}

\begin{lemma}\label{lemma:strong_convexity}
Let Assumption \ref{as:e_value} hold. Then for all $\pi \in \Pi_{\mu}$, it holds that $L(\cdot,\pi;d_{\beta}^{\pi})$ is $\lambda_{\beta}(1-\gamma)$-strongly convex.
\end{lemma}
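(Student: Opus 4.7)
The plan is to show strong convexity by computing the Hessian of $L(\cdot,\pi;d_\beta^\pi)$ with respect to $\theta$ and then using the second part of Lemma \ref{lemma:occupancy_prop} to decompose the occupancy measure into a part proportional to $\beta$ (on which Assumption \ref{as:e_value} gives a uniform spectral lower bound) plus a positive semidefinite remainder.

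First I would differentiate under the integral: since $L(\theta,\pi;\zeta)$ is a quadratic in $\theta$ for fixed $\pi$ and $\zeta$, a direct computation gives
\begin{equation*}
    \nabla_\theta^2 L(\theta,\pi;\zeta) \;=\; \int_{S\times A} \phi(s,a)\phi(s,a)^\top\, \zeta(ds,da),
\end{equation*}
independent of $\theta$. Thus it suffices to bound the minimum eigenvalue of this Gram-type matrix when $\zeta = d_\beta^\pi$.

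Next I would invoke identity \eqref{eq:property2_occupancy} from Lemma \ref{lemma:occupancy_prop}, which as measures on $S\times A$ states $d_\beta^\pi = (1-\gamma)\beta + \gamma\, d_{J^\pi\beta}^\pi$. Substituting into the Hessian yields
\begin{equation*}
    \nabla_\theta^2 L(\theta,\pi;d_\beta^\pi) \;=\; (1-\gamma)\!\!\int_{S\times A}\! \phi\phi^\top\, \beta(ds,da) \;+\; \gamma\!\!\int_{S\times A}\! \phi\phi^\top\, d_{J^\pi\beta}^\pi(ds,da).
\end{equation*}
The second integral is positive semidefinite as an integral of rank-one PSD matrices against a nonnegative measure, so it can be discarded for a lower bound. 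By Assumption \ref{as:e_value}, the first integral satisfies $\int \phi\phi^\top\, d\beta \succeq \lambda_\beta I$. Therefore $\nabla_\theta^2 L(\theta,\pi;d_\beta^\pi) \succeq (1-\gamma)\lambda_\beta I$ uniformly in $\theta$ and in $\pi \in \Pi_\mu$, which is precisely $(1-\gamma)\lambda_\beta$-strong convexity of $L(\cdot,\pi;d_\beta^\pi)$.

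There is no real obstacle here: the only subtlety is making sure that \eqref{eq:property2_occupancy} is applied correctly to decompose the vector-valued (matrix-valued) integral, but since it is a set-function identity it extends to integration of any nonnegative or integrable function by linearity and standard measure-theoretic approximation. The policy-independence of the spectral bound is exactly what Assumption \ref{as:e_value} was designed to deliver, so no continuity-of-eigenvalues argument is needed.
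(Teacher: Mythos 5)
Your proof is correct and follows essentially the same route as the paper: the paper's one-line argument ``$\Sigma_{d_{\beta}^{\pi}} \succeq (1-\gamma)\Sigma_{\beta} \succeq (1-\gamma)\lambda_{\beta}I$ by Lemma \ref{lemma:occupancy_prop} and Assumption \ref{as:e_value}'' is exactly the decomposition $d_{\beta}^{\pi} = (1-\gamma)\beta + \gamma\, d_{J_{\pi}\beta}^{\pi}$ followed by discarding the positive semidefinite remainder, which you have simply written out in full. No gaps.
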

\begin{proof}
For any $\xi \in \mathcal{P}(S \times A)$, let $\Sigma_{\xi} := \int_{S \times A} \phi(s,a)\phi(s,a)^\top \xi(ds,da) \in \mathbb{R}^{N \times N}$. Then by Lemma \ref{lemma:occupancy_prop} and Assumption \ref{as:e_value} it holds that $\Sigma_{d_{\beta}^{\pi}
} \succeq (1-\gamma)\Sigma_{\beta} \succeq (1-\gamma)\lambda_{\beta} I$ and thus $L(\cdot,\pi;d_{\beta}^{\pi})$ is $\lambda_{\beta}(1-\gamma)$-strongly convex.
\end{proof}

\subsection{Proof of Lemma \ref{lemma:gradient}}\label{sec:gradient_proof}
\begin{proof}
Recall that $Q(s,a) = \inner{\theta}{\phi(s,a)}$ for some $\theta \in \mathbb{R}^N$ and that for all $\pi \in \Pi_{\mu}$, there exists $\theta_{\pi} \in \mathbb{R}^{N}$ such that $Q^{\pi}(s,a) = \inner{\theta_{\pi}}{\phi(s,a)}$ by Assumption \ref{as:linearmdp}. Then by definition of the semi-gradient of the MSBE $g : \mathbb{R}^N \times \mathcal{P}(A|S) \to \mathbb{R}^{N}$ in \eqref{eq:semi_gradient_def}, it holds that 
\begin{align}
&\inner{g(\theta,\pi)}{\theta - \theta_{\pi}} = \inner{\int_{S \times A}\left(Q(s,a) - \mathrm{T}^{\pi}Q(s,a)\right)\phi(s,a)d_{\beta}^{\pi}(da,ds) }{\theta - \theta_{\pi}} \\
&= \inner{\int_{S \times A}(Q(s,a) - Q^{\pi}_{\tau}(s,a))\phi(s,a) d_{\beta}^{\pi}(da,ds)}{\theta - \theta_{\pi}} \\
&\qquad+ \inner{\int_{S \times A}(Q^{\pi}_{\tau}(s,a) - \mathrm{T}^{\pi}Q(s,a)\phi(s,a) d_{\beta}^{\pi}(da,ds)}{\theta - \theta_{\pi}} \\
&= \inner{\int_{S \times A}(Q(s,a) - Q^{\pi}_{\tau}(s,a))\phi(s,a) d_{\beta}^{\pi}(da,ds)}{\theta - \theta_{\pi}} \\
&\qquad-\gamma \inner{\int_{S \times A \times S \times A}(Q(s',a') - Q^{\pi}_{\tau}(s',a'))\phi(s,a)P^{\pi}(ds',da'|s,a)d_{\beta}^{\pi}(ds,da)}{\theta - \theta_{\pi}}, 
\end{align} 
where we added and subtracted the true state-action value function $Q^{\pi}_{\tau} \in B_{b}(S \times A)$ in the second equality and used the fact that it is a fixed point of the Bellman operator defined in \eqref{eq:bellman_operator_Def}. To ease notation, let $\varepsilon(s,a) := Q(s,a) - Q^{\pi}_{\tau}(s,a)$. Multiplying both sides by $-1$ and using the associativity of the inner product, we have
\begin{align}
&-\inner{g(\theta,\pi)}{\theta - \theta_{\pi}} \\
&= -\inner{\int_{S \times A}\varepsilon(s,a)\phi(s,a) d_{\beta}^{\pi}(da,ds)}{\theta - \theta_{\pi}} \\
&\qquad+\gamma \inner{\int_{S \times A}\varepsilon(s',a')\phi(s,a)P^{\pi}(ds',da'|s,a)d_{\beta}^{\pi}(ds,da)}{\theta - \theta_{\pi}} \\
&=-\int_{S \times A}\varepsilon(s,a)\inner{\phi(s,a)}{\theta - \theta_{\pi}} d_{\beta}^{\pi}(da,ds) \\
&\qquad+\gamma \int_{S \times A}\varepsilon(s',a')\inner{\phi(s,a)}{\theta - \theta_{\pi}}P^{\pi}(ds',da'|s,a)d_{\beta}^{\pi}(ds,da)\\
\label{eq:sub_into_here}
&=-\int_{S \times A} \varepsilon(s,a)^2 d_{\beta}^{\pi}(da,ds) \\
\label{eq:CS_2}
&\qquad+ \gamma\int_{S \times A \times S \times A} \varepsilon(s,a)\varepsilon(s',a')P^{\pi}(ds',da'|s,a)d_{\beta}^{\pi}(ds,da) \\
&= I^{(1)} + \gamma I^{(2)}.
\end{align}Now applying Lemma \ref{lemma:main_integrals} to $I^{(2)}$ we have
\begin{align}
I^{(2)} &:= \int_{S \times A \times S \times A} \varepsilon(s,a)\varepsilon(s',a')P^{\pi}(ds',da'|s,a)d_{\beta}^{\pi}(ds,da) \\ 
&\leq \frac{1}{\sqrt{\gamma}} \int_{S \times A} \varepsilon(s,a)^2 d_{\beta}^{\pi}(ds,da).
\end{align}Thus it holds that
\begin{align}
-\inner{g(\theta,\pi)}{\theta - \theta_{\pi}} &\leq I^{(1)} + \gamma I^{(2)} \\
&\leq-(1-\sqrt{\gamma})\int_{S \times A} \epsilon(s,a)^2 d_{\beta}^{\pi}(da,ds) \\
&= -(1-\sqrt{\gamma}) \int_{S \times A}(Q(s,a) - Q^{\pi}_{\tau}(s,a))^2 d_{\beta}^{\pi}(da,ds)\\
&=-(1-\sqrt{\gamma})\inner{\nabla_{\theta}L(\theta,\pi;d_{\beta}^{\pi})}{\theta-\theta_{\pi}},
\end{align} where the last inequality follows from the Assumption \ref{as:linearmdp} and the definition of $Q(s,a) = \inner{\theta}{\phi(s,a)}$.

\end{proof}

\section{Proof of Stability Results}
\subsection{Proof of Lemma \ref{lemma:lyap_drift}}\label{sec:lyapononv_proof}
\begin{proof}

Consider the following equation
\begin{align}\label{eq:originallyaponov}
\frac{1}{2\eta_t}\frac{d}{dt} \left | \theta_t \right|^2 
&= \frac{1}{\eta_t} \inner{\frac{d}{dt}\theta_t}{\theta_t} \\
&= - \inner{g(\theta_t,\pi_t)}{\theta_t} \\
&=  -\inner{\int_{S\times A} \left( Q_t(s,a) - T^{\pi_t}Q_t(s,a) \right) \phi(s,a)\, d_{\beta}^{\pi_t}(da,ds)}{\theta_t} \\
\label{eq:thm1_Qphi}
&= -\inner{\int_{S\times A} Q_t(s,a)\phi(s,a)\, d_{\beta}^{\pi_t}(da,ds)}{\theta_t} \\
\label{eq:thm1_TQphi}
&\qquad + \inner{\int_{S \times A} T^{\pi_t}Q_t(s,a)\phi(s,a)\, d_{\beta}^{\pi_t}(da,ds)}{\theta_t} \\
&:= -J^{(1)}_t + J^{(2)}_t
\end{align}where we used the $\theta_t$ dynamics from \eqref{eq:dynamics_theta} in the second equality and the definition of the semi-gradient in the third equality. For any $\pi \in \Pi_{\mu}$, let $ \Sigma^{\pi} \in \mathbb{R}^{N \times N}$ be
\begin{equation}
\Sigma^{\pi} = \int_{S \times A} \phi(s,a)\phi(s,a)^{\top} d_{\beta}^{\pi}(da,ds).
\end{equation} Then by definition we have that $Q_t(s,a) = \inner{\theta_t}{\phi(s,a)}$, hence for $J^{(1)}_t$ we have
\begin{align}
J^{(1)}_t &=\inner{\int_{S\times A} Q_t(s,a)\phi(s,a)\, d_{\beta}^{\pi_t}(da,ds)}{\theta_t} \\
&= \inner{\int_{S\times A}\inner{\theta_t}{\phi(s,a)}\phi(s,a)d_{\beta}^{\pi_t}(da,ds)}{\theta_t} \\
&= \inner{\theta_t}{\left(\int_{S \times A} \phi(s,a) \phi(s,a)^{\top}d_{\beta}^{\pi_t}(da,ds)\right)\theta_t} \\
\label{eq:PSD_eq}
&= \inner{\theta_t}{\Sigma^{\pi_t}\theta_t}
\end{align} 
Now dealing with $J^{(1)}_t$, expanding the Bellman operator defined in \eqref{eq:bellman_operator_Def} we have
\begin{align} \label{eq:one_before_CS}
J^{(2)}_t &= \inner{\int_{S \times A} \mathrm{T}^{\pi_t}Q_t(s,a)\phi(s,a)\, d_{\beta}^{\pi_t}(da,ds)}{\theta_t} \\
&= \inner{\int_{S \times A}c(s,a) \phi(s,a) d_{\beta}^{\pi_t}(da,ds)}{\theta_t} \\
\label{eq:CS_integral}
&\qquad+ \gamma \inner{\int_{S \times A} \inner{\theta_t}{\phi(s',a')}\phi(s,a)P^{\pi_t}(ds',da'|s,a)d_{\beta}^{\pi_t}(da,ds)}{\theta_t} \\
&\qquad+ \tau\gamma \inner{\int_{S \times A} \left(\int_{S} \operatorname{KL}(\pi_t(\cdot|s'),\mu)P(ds'|s,a)\phi(s,a)d_{\beta}^{\pi_t}(da,ds)\right)}{\theta_t} \\
&\leq |c|_{B_b{(S\times A)}}|\theta_t| + \gamma I^{(1)}_t + \tau\gamma I^{(2)}_t
\end{align}where we defined \[I_t^{(1)} = \inner{\int_{S \times A} \inner{\theta_t}{\phi(s',a')}\phi(s,a)P^{\pi_t}(ds',da'|s,a)d_{\beta}^{\pi_t}(da,ds)}{\theta_t}, \] \[I^{(2)}_t = \inner{\int_{S \times A} \left(\int_{S} \operatorname{KL}(\pi_t(\cdot|s'),\mu)P(ds'|s,a)\phi(s,a)d_{\beta}^{\pi}(da,ds)\right)}{\theta_t}.\]
Moreover, to ease notation let \[\mathrm{\mathrm{K}_t} := \sup_{s \in S} \operatorname{KL}(\pi_t(\cdot|s)|\mu)\] and temporarily let $\kappa_t(ds,da,ds',da') := P^{\pi_t}(ds',da'|s,a)d_{\beta}^{\pi_t}(da,ds)$. Now focusing on $I_t^{(1)}$, it holds that
\begin{align}
I^{(1)}_t&=\inner{\int_{S \times A \times S \times A} \inner{\theta_t}{\phi(s',a')}\phi(s,a)\kappa_t(da',ds',da,ds)}{\theta_t} \\
&= \int_{S \times A \times S \times A}\inner{\theta_t}{\phi(s,a)}\inner{\theta_t}{\phi(s',a')} \kappa_t(ds',da',ds,da).
\end{align}Now using Lemma \ref{lemma:main_integrals} with $f = \inner{\theta}{\phi(\cdot,\cdot)}$ we have
\begin{align}
I^{(1)}_t &\leq
\frac{1}{\sqrt{\gamma}} \left( \int_{S \times A } \inner{\theta_t}{\phi(s,a)}^2d_{\beta}^{\pi_t}(ds,da) \right)^{\frac{1}{2}}\left( \int_{S \times A } \inner{\theta_t}{\phi(s,a)}^2d_{\beta}^{\pi_t}(ds,da) \right)^{\frac{1}{2}} \\
&= \frac{1}{\sqrt{\gamma}} \int_{S \times A } \inner{\theta_t}{\phi(s,a)}^2d_{\beta}^{\pi_t}(ds,da) \\
&= \frac{1}{\sqrt{\gamma}}\inner{\theta_t}{\Sigma^{\pi_t}\theta_t}.
\end{align}Thus all together it holds that
\begin{equation}
\gamma I_t^{(1)} \leq \sqrt{\gamma} \inner{\theta_t}{\Sigma^{\pi_t}\theta_t}.
\end{equation}
Now focusing on $I_t^{(2)}$, we have
\begin{align}
I_t^{(2)} &= \inner{\int_{S \times A} \left(\int_{S} \operatorname{KL}(\pi_t(\cdot|s'),\mu)P(ds'|s,a)\right)\phi(s,a)d_{\beta}^{\pi_t}(da,ds)}{\theta_t} \\
&\leq \mathrm{\mathrm{K}_t} \left|\int_{S \times A} \phi(s,a)d_{\beta}^{\pi_t}(ds,da)  \right| |\theta_t|\\
&\leq \mathrm{\mathrm{K}_t} |\theta_t|
\end{align}
where we used Assumption \ref{as:bounded_phi} in the final inequality. Hence along with \eqref{eq:PSD_eq}, \eqref{eq:originallyaponov} becomes
\begin{align}\label{eq:updated_lyap}
\frac{1}{2\eta_t}\frac{d}{dt}|\theta_t|^2  &\leq -J^{(1)}_t + J^{(2)}_t \\
&\leq -\inner{\theta_t}{\Sigma^{\pi_t}\theta_t} + |c|_{B_b{(S\times A)}}|\theta_t| + \gamma I^{(1)}_t + \tau\gamma I^{(2)}_t \\
&\leq -\inner{\theta_t}{\Sigma^{\pi_t}\theta_t} + \sqrt{\gamma}\inner{\theta_t}{\Sigma^{\pi_t}\theta_t} + |c|_{B_b(S\times A)}|\theta_t| + \tau\gamma\mathrm{K}_t|\theta_t| \\
&= -(1-\sqrt{\gamma})\inner{\theta_t}{\Sigma^{\pi_t}\theta_t} + \left(|c|_{B_b(S\times A)} + \tau\gamma\mathrm{K}_t \right)|\theta_t|.
\end{align} Observe that by \eqref{eq:property2_occupancy} and Assumption \ref{as:bounded_phi}, $\Sigma^{\pi} \in \mathbb{R}^{N \times N}$ is positive definite for all $\pi \in \mathcal{P}(A|S)$, hence it holds that
\begin{equation}
\inner{\theta_t}{\Sigma^{\pi_t}\theta_t} \geq (1-\gamma)\lambda_{\beta}\left|\theta_t\right|^2.
\end{equation}Therefore \eqref{eq:updated_lyap} becomes 
\begin{equation}
\frac{1}{2\eta_t}\frac{d}{dt}|\theta_t|^2 \leq -(1-\sqrt{\gamma})(1-\gamma)\lambda_{\beta}\left|\theta_t\right|^2 + (|c|_{B_b(S \times A)} + \tau\gamma \mathrm{K}_t)|\theta_t|
\end{equation}Let $\Gamma := \lambda_{\beta}(1-\gamma)(1-\sqrt{\gamma})$. By Young's inequality, there exists $\epsilon > 0$ such that
\begin{align}
\frac{1}{2\eta_t}\frac{d}{dt}|\theta_t|^2 &\leq -\Gamma|\theta_t|^2 + \frac{\epsilon}{2}|\theta_t|^2 + \frac{(|c|_{B_b(S \times A)} + \tau\gamma\mathrm{K}_t)^2}{2\epsilon} \\
&\leq -\Gamma|\theta_t|^2 + \frac{\epsilon}{2}|\theta_t|^2 + \frac{|c|_{B_b(S \times A)}^2 + \tau^2\gamma^2\mathrm{K}_t^2}{\epsilon},
\end{align}where we used the identity $(a+b)^2 \leq 2a^2 + 2b^2$. Choosing $\epsilon = \Gamma$ we arrive at
\begin{equation}
\frac{1}{2\eta_t}\frac{d}{dt}|\theta_t|^2 \leq -\frac{\Gamma}{2}\left|\theta_t\right|^2 + \frac{\tau^2\gamma^2 \mathrm{K}_t^2}{\Gamma} + \frac{ |c|_{B_b(S \times A)}^2 }{\Gamma}
\end{equation} which concludes the proof.
\end{proof}

\subsection{Proof of Theorem \ref{Theorem:gronwall_KL}}
\label{sec:gronwall_KL_sec}
\begin{proof}
By Lemma \ref{lemma:lyap_drift}, we have that for all $r \geq 0$
\begin{equation}
\frac{1}{2\eta_r}\frac{d}{dr}|\theta_r|^2 \leq -\frac{\Gamma}{2}\left|\theta_r\right|^2 + \frac{\tau^2\gamma^2 \mathrm{K}_r^2}{\Gamma} + \frac{ |c|_{B_b(S \times A)}^2 }{\Gamma}.
\end{equation}Rearranging, it holds that for all $t \geq 0$
\begin{align}
|\theta_r|^2 \leq -\frac{1}{\Gamma\eta_r}\frac{d}{dr}|\theta_r|^2  + \frac{2|c|_{B_b(S \times A)}^2 + 2\tau^2\gamma^2\mathrm{K}_r^2}{\Gamma^2}.
\end{align} Multiplying both sides by $e^{-\tau(t-r)}$ and integrating over $r$ from $0$ to $t$ we have that for all $t \geq 0$
\begin{align}\label{eq:gronwall_Step1}
\int_0^t e^{-\tau(t-r)}|\theta_r|^2 dr &\leq -\frac{1}{\Gamma}\int_0^t e^{-\tau(t-r)}\frac{1}{\eta_r}\frac{d}{dr}|\theta_r|^2  dr + \frac{2|c|_{B_b(S \times A)}^2}{\Gamma^2}\int_0^t e^{-\tau(t-r)} dr \\
&\qquad+ \frac{2\tau^2 \gamma^2}{\Gamma^2}\int_0^t e^{-\tau(t-r)}\mathrm{K}_r^2 dr \\
&\leq -\frac{1}{\Gamma}\int_0^t e^{-\tau(t-r)}\frac{1}{\eta_r}\frac{d}{dr}|\theta_r|^2  dr + \frac{2|c|_{B_b(S \times A)}^2}{\Gamma^2\tau} +\frac{2\tau^2 \gamma^2}{\Gamma^2}\int_0^t e^{-\tau(t-r)}\mathrm{K}_r^2 dr,
\end{align} where we used that $\int_0^t e^{-\tau(t-r)} dr \leq \frac{1}{\tau}$. Integrating the first term by parts, we have
\begin{align}\label{eq:int_by_parts}
-\int_0^t e^{-\tau(t-r)}\frac{1}{\eta_r}\frac{d}{dr}|\theta_r|^2  dr &= -\frac{|\theta_t|^2}{\eta_t} + e^{-\tau t}\frac{|\theta_0|^2}{\eta_0} + \tau\int_0^t |\theta_r|^2\frac{e^{-\tau(t-r)}}{\eta_r} dr  \\
&- \int_0^t |\theta_r|^2 \frac
{e^{-\tau(t-r)}\frac{d}{dr}\eta_r}{\eta_r^2}dr.
\end{align}Since by definition we have that for all $t\geq 0$, $\eta_t \geq 1$ and $\frac{d}{dt}\eta_t \geq 0$ it holds that 
\begin{equation}
\int_0^t |\theta_r|^2 \frac
{e^{-\tau(t-r)}\frac{d}{dr}\eta_r}{\eta_r^2}dr \geq 0.
\end{equation}Hence dropping the negative terms on the right hand side of \eqref{eq:int_by_parts} and using that $\eta_t \geq \eta_0$ for all $t \geq 0$, we have
\begin{align}
-\frac{1}{\Gamma}\int_0^t e^{-\tau(t-r)}\frac{1}{\eta_r}\frac{d}{dr}|\theta_r|^2  dr &\leq e^{-\tau t}\frac{|\theta_0|^2}{\Gamma\eta_0} + \frac{\tau}{\Gamma\eta_0}\int_0^t e^{-\tau(t-r)}|\theta_r|^2 dr.
\end{align}Substituting this back into \eqref{eq:gronwall_Step1}, for all $t \geq 0$ we have that
\begin{align}
\int_0^t e^{-\tau(t-r)}|\theta_r|^2 dr &\leq e^{-\tau t}\frac{|\theta_0|^2}{\Gamma\eta_0} + \frac{\tau}{\Gamma\eta_0}\int_0^t e^{-\tau(t-r)}|\theta_r|^2 dr \\
&+ \frac{2|c|_{B_b(S \times A)}^2}{\Gamma^2\tau} + \frac{2\tau^2 \gamma^2}{\Gamma^2}\int_0^t e^{-\tau(t-r)}\mathrm{K}_r^2 dr.
\end{align}Grouping like terms we have
\begin{align}
\left(1-\frac{\tau}{\Gamma\eta_0}\right)\int_0^t e^{-\tau(t-r)}|\theta_r|^2 dr \leq e^{-\tau t}\frac{|\theta_0|^2}{\Gamma\eta_0}+ \frac{2|c|_{B_b(S \times A)}^2}{\Gamma^2\tau} + \frac{2\tau^2 \gamma^2}{\Gamma^2}\int_0^t e^{-\tau(t-r)}\mathrm{K}_r^2 dr.
\end{align} Recall that we have $\eta_0 > \frac{\tau}{\Gamma}$ to ensure that $1-\frac{\tau}{\Gamma\eta_0} > 0$. Dividing through by $1-\frac{\tau}{\Gamma\eta_0}$ gives for all $t \geq 0$ that
\begin{align}\label{eq:thetaweighted}
\int_0^t e^{-\tau(t-r)}|\theta_r|^2 dr \leq  \sigma_1 + \sigma_2 \int_0^t e^{-\tau(t-r)}\mathrm{K}_r^2 dr
\end{align} where we've set \[\sigma_1 : = \frac{|\theta_0|^2}{\Gamma \eta_0 \left(1-\frac{\tau}{\Gamma\eta_0} \right)} + \frac{2 |c|_{B_b(S \times A)}^2}{\Gamma^2 \tau \left( 1-\frac{\tau}{\Gamma\eta_0}\right)},\] \[\sigma_2 := \frac{2\tau^2 \gamma^2}{\Gamma^2 \left( 1- \frac{\tau}{\Gamma\eta_0}\right)}.\] Recall the approximate Fisher--Rao gradient flow for the policies $\left\{\pi_t\right\}_{t\geq 0}$, which for all $t \geq 0$ and for all $s \in S$, $a \in A$ is
\begin{equation}
\partial_t \ln \frac{d\pi_t}{d\mu}(s,a) = -\left( Q_t(s,a) + \tau \ln\frac{d\pi_t}{d\mu}(a,s) - \int_{A} \left(Q_t(s,a) + \tau \ln\frac{d\pi_t}{d\mu}(a,s)\right)\pi_t(da|s)\right)
\end{equation} Duhamel's principle yields for all $t\geq 0$ that
\begin{align}\label{eq:duhamel}
\ln \frac{d\pi_t}{d\mu}(s,a) = e^{-\tau t}\ln \frac{d\pi_0}{d\mu}(a,s) &+ \int_0^t e^{-\tau(t-r)} \left(\int_{A} Q_r(s,a)\pi_r(da|s)
- Q_r(s,a) \right) dr \\
&+ \tau\int_0^t e^{-\tau(t-r)} \operatorname{KL}(\pi_{r}(\cdot|s)|\mu)dr
\end{align}
Observe that since $\pi_0 \in \Pi_{\mu}$, there exists $C_1 \geq 1$ such that $ \ln \left| \frac{d\pi_t}{d \mu} \right|_{B_b(S \times A)} \leq C_1$. Using that $e^{-\tau t} \leq 1$ and assumption \ref{as:bounded_phi} gives that for all $t\geq 0$ 
\begin{align}
\ln \frac{d\pi_t}{d\mu}(s,a) &\leq  C_1 + 2\int_0^t e^{-\tau(t-r)}|\theta_r| dr + \tau \int_0^t e^{-\tau(t-r)} \operatorname{KL}(\pi_r(\cdot|s)|\mu) dr \\
&\leq  C_1 +  2\int_0^t e^{-\tau(t-r)}|\theta_r| dr + \tau \int_0^t e^{-\tau(t-r)} \mathrm{K
}_r dr 
\end{align} Integrating over the actions with respect to $\pi_t(\cdot|s) \in \mathcal{P}(A)$ gives for all $t \geq 0$ that 
\begin{equation}
\operatorname{KL}(\pi_t(\cdot|s)|\mu) \leq C_1 +  2\int_0^t e^{-\tau(t-r)}|\theta_r| dr + \tau \int_0^t e^{-\tau(t-r)} \mathrm{K
}_r dr 
\end{equation} where we again use that $\mathrm{K}_r = \sup_{s \in S} \operatorname{KL}(\pi_r(\cdot|s) | \mu)$. Following from the techniques in \cite{polyak}, observe that from \eqref{eq:duhamel} and Assumption \ref{as:bounded_phi} we similarly get for all $t\geq 0$ that
\begin{equation}
\ln \frac{d\mu}{d\pi_t}(a,s) = -\ln \frac{d\pi_t}{d\mu}(s,a) \leq C_1 +  2\int_0^t e^{-\tau(t-r)}|\theta_r| dr - \tau \int_0^t e^{-\tau(t-r)} \mathrm{K
}_r dr. 
\end{equation} Now integrating over the actions with respect to the reference measure $\mu \in \mathcal{P}(A)$ we have
\begin{equation}
\operatorname{KL}(\mu | \pi_t(\cdot|s)) \leq C_1 +  2\int_0^t e^{-\tau(t-r)}|\theta_r| dr - \tau \int_0^t e^{-\tau(t-r)} \mathrm{K
}_r dr 
\end{equation}Moreover, using the non-negativity of the KL divergence, it holds for all $t \geq 0$ that
\begin{equation}
\operatorname{KL}(\pi_t(\cdot|s)|\mu) \leq \operatorname{KL}(\pi_t(\cdot|s)|\mu) + \operatorname{KL}(\mu | \pi_t(\cdot|s)) \leq 2C_1 +  4\int_0^t e^{-\tau(t-r)}|\theta_r| dr
\end{equation}
Since this holds for any $s \in S$, it holds for all $t\geq 0$ that
\begin{equation}
\mathrm{K
}_t \leq 2C_1 +  4\int_0^t e^{-\tau(t-r)}|\theta_r| dr
\end{equation} Now squaring both sides and using the H\"older's inequality, we have
\begin{align}
\mathrm{K
}_t^2 &\leq \left(2 C_1 +  4\int_0^t e^{-\tau(t-r)}|\theta_r| dr  \right)^2 \\
&\leq 8(C_1)^2 +  32\left(\int_0^t e^{-\tau(t-r)}|\theta_r| dr \right)^2 \\ 
&= 8(C_1)^2 +  32\left(\int_0^t e^{-\frac{\tau}{2}(t-r)}e^{-\frac{\tau}{2}(t-r)}|\theta_r| dr \right)^2\\
& \leq 8(C_1)^2 + 32\left(\int_0^t e^{-\tau(t-r)} dr\right) \left(\int_0^t e^{-\tau(t-r)} |\theta_r|^2 dr\right)\\\label{eq:KL_gronwall_step2}
&\leq 8(C_1)^2 +  \frac{32}{\tau}\int_0^t e^{-\tau(t-r)}|\theta_r|^2 dr,
\end{align}where we again used $\int_0^t e^{-\tau(t-r)} dr \leq \frac{1}{\tau}$. We can now substitute \eqref{eq:thetaweighted} into \eqref{eq:KL_gronwall_step2} to arrive at
\begin{align}
\mathrm{K
}_t^2 &\leq 8(C_1)^2 +  \frac{32}{\tau}\sigma_1 + \frac{32}{\tau}\sigma_2\int_0^t e^{-\tau(t-r)}\mathrm{K}_r^2 dr\\
&:= a_1 + a_2\int_0^t e^{-\tau(t-r)}\mathrm{K}_r^2 dr \label{eq:bellman_type}
\end{align} with $a_1 = 8(C_1)^2 + \frac{32}{\tau}\sigma_1$ and $a_2 = \frac{32\sigma_2}{\tau}$. 
\end{proof}

\subsection{Proof of Corollary \ref{corr:all_gamma_KL}}\label{sec:all_gamma_KL}
\begin{proof}
By Theorem \ref{Theorem:gronwall_KL} it holds that
\begin{equation}
\mathrm{K}_t^2 \leq a_1 + a_2\int_0^t e^{-\tau(t-r)}\mathrm{K}_r^2 dr.
\end{equation}
Observe that by multiplying through by $e^{\tau t}$, we can rewrite this as
\begin{equation}
e^{\tau t}\mathrm{K}_t^2 \leq e^{\tau t}a_1 + a_2\int_0^t e^{\tau r}\mathrm{K}_r^2 dr.
\end{equation}Hence after defining $g(t) = e^{\tau t} \mathrm{K}_t^2$ and applying Gr\"onwall's inequality (Lemma \ref{lemma:gronwall}), for all $\gamma \in (0,1)$ it holds for all $t \geq 0$ that
\begin{equation}
\mathrm{K}_t^2 \leq a_1e^{a_2 t}.
\end{equation}
\end{proof}

\subsection{Proof of Corollary \ref{corr:all_gamma_theta}}\label{sec:all_gamma_theta}
\begin{proof}
By Corollary \ref{corr:all_gamma_KL} and Lemma \ref{lemma:lyap_drift}, for all $\gamma \in (0,1)$ it holds that
\begin{align}
\frac{1}{2}\frac{d}{dt}\left|\theta_t \right|^2 &\leq -\frac{\Gamma}{2}\eta_t|\theta_t|^2 + b_t\eta_t
\end{align} such that 
\begin{equation}
b_t = \left(\frac{2|c|_{B_b(S \times A)}^2 + 2\tau^2\gamma^2 a_1e^{a_2t }}{\Gamma^2}\right).
\end{equation}Recall that there exists $\alpha > 0$ such that $\frac{d}{dt}\eta_t \leq \alpha{\eta_t}$, then another application of Gr\"onwall's Lemma then concludes the proof.
\end{proof}

\section{Proof of Convergence Results}
\subsection{Proof of Lemma \ref{lemma:Q_derivative}}
\label{sec:Q_function_derivative_proof}
\begin{proof}
By the definition of the state-action value function \eqref{eq:Q_func} it holds that 
\begin{align}
\frac{d}{dt}Q^{\pi_t}_{\tau}(s,a) &= \lim_{h\to 0} \frac{Q^{\pi_{t+h}}_{\tau}(s,a) - Q^{\pi_t}_{\tau}(s,a)}{h} \\
&= \gamma \int_{S} \frac{d}{dt}V^{\pi_t}_{\tau}(s') P(ds'|s,a).
\end{align}Now observe that by \cite{david_fisher}[Proof of Proposition 2.6], we have
\begin{equation}
\frac{d}{dt}V^{\pi_t}_{\tau}(s) = \frac{1}{1-\gamma}\int_{S\times A} A^{\pi_t}_{\tau}(s,a) \partial_t \pi_t(da|s')d^{\pi_t}(ds'|s).
\end{equation}Thus we have
\begin{align}
\frac{d}{dt}Q^{\pi_t}_{\tau}(s,a) &= \frac{\gamma}{1-\gamma} \int_{S} \left(\int_{S \times A} A^{\pi_t}_{\tau}(s'',a'')\partial_t\pi_t(da''|s'')d^{\pi_t}(ds''|s') \right)P(ds'|s,a).
\end{align}
\end{proof}

\subsection{Proof of Theorem \ref{thm:convergence_1}}
\label{sec:conv1}
\begin{proof}
Recall the performance difference Lemma (Lemma \ref{lem:performance_diff}): for all $\rho \in \mathcal{P}(S)$ and $\pi,\pi'\in \Pi_{\mu}$, 
\begin{align}
&V^{\pi}_\tau(\rho)-V^{\pi'}_\tau(\rho) \\
&\quad = \frac{1}{1-\gamma}\int_S \bigg[\int_A\left(Q^{\pi'}_{\tau}(s,a)+\tau \ln \frac{d \pi'}{d\mu}(a,s)\right)(\pi-\pi')(da|s) + \tau    \operatorname{KL}(\pi(\cdot | s)|\pi'(\cdot | s)) \bigg]d^{\pi}_\rho(ds)\,.
\end{align}Now let $\pi = \pi^*$ and $\pi' = \pi_t$ and multiply both sides by $-1$ we have
\begin{align}\label{eq:performance_diff}
V^{\pi_t}_{\tau}(\rho) - V^{\pi^*}_{\tau}(\rho) &= \frac{-1}{1-\gamma}\int_{S} \Bigg(\int_A \left(Q^{\pi_t}(s,a) + \tau \ln\frac{d\pi_t}{d\mu}(a,s)\right)(\pi^* - \pi_t)(da|s) \\
&\qquad+ \tau \operatorname{KL}(\pi^*(\cdot|s)|\pi_t(\cdot|s))\Bigg)d_{\rho}^{\pi^*}(ds).
\end{align}
Recall the approximate Fisher--Rao dynamics, which we write as
\begin{equation}\label{eq:adding_zero}
\partial_t \ln \frac{d\pi_t}{d\mu}(s,a) + \left(Q_t(s,a) + \tau \ln\frac{d\pi_t}{d\mu}(a,s) - \int_{A} \left( Q_t(s,a') + \tau \ln\frac{d\pi_t}{d\mu}(a',s) \right) \pi_t(da'|s)\right) = 0.
\end{equation}Observe that since the normalisation constant (enforcing the conservation of mass along the flow) $\int_{A} \left( Q_t(s,a) + \tau \ln\frac{d\pi_t}{d\mu}(a,s) \right) \pi_t(da|s)$ is independent of $a \in A$, it holds that \[\int_{A} \left(\int_{A} \left( Q_t(s,a') + \tau \ln\frac{d\pi_t}{d\mu}(a',s) \right) \pi_t(da'|s)\right) (\pi^* - \pi_t)(da|s) = 0.\] Hence adding 0 in the form of \eqref{eq:adding_zero} into \eqref{eq:performance_diff} it holds that for all $t \geq 0$
\begin{align}\label{eq:step_1_of_conv_1}
&V^{\pi_t}_{\tau}(\rho) - V^{\pi^*}_{\tau}(\rho)= \frac{1}{1-\gamma}\Bigg(
\int_{S \times A} \partial_t \ln \frac{d \pi_t}{d\mu}(a,s) (\pi^* - \pi_t)(da|s) d_{\rho}^{\pi^*}(ds)  \\
&\quad + \int_{S \times A} (Q_t(s,a) - Q^{\pi_t}(s,a)) (\pi^* - \pi_t)(da|s) d_{\rho}^{\pi^*}(ds) -\tau\int_S \operatorname{KL}(\pi^*(\cdot|s)|\pi_t(\cdot|s)d_{\rho}^{\pi^*}(ds)\Bigg).
\end{align}By \cite[Lemma 3.8]{kerimkulov2024mirrordescentstochasticcontrol} and Corollary \ref{corr:all_gamma_KL}, for any fixed $\nu \in \Pi_{\mu}$, the map $t \to \operatorname{KL}(\nu|\pi_t)$ is differentiable. Hence we have
\begin{align}
\int_{A} \partial_t \ln \frac{d\pi_t}{d\mu}(s,a)(\pi^* - \pi_t)(da|s) &= \int_{A} \partial_t \ln \frac{d\pi_t}{d\mu}(s,a) \pi^*(da|s) - \int_{A} \partial_t \ln \frac{d\pi_t}{d\mu}(s,a)\pi_t(da|s) \\
&=  \int_{A} \partial_t \ln \frac{d\pi_t}{d\mu}(s,a) \pi^*(da|s) \\
&=-\frac{d}{dt}\operatorname{KL}(\pi^*(\cdot|s)|\pi_t(\cdot|s)),
\end{align}where we used the conservation of mass of the policy dynamics in the second equality. Substituting this into \eqref{eq:step_1_of_conv_1} we have
\begin{align}\label{eq:step_2_of_conv_1}
&V^{\pi_t}_{\tau}(\rho) - V^{\pi^*}_{\tau}(\rho)= \frac{1}{1-\gamma}\Bigg(-\frac{d}{dt}\int_{S}\operatorname{KL}(\pi^*(\cdot|s)|\pi_t(\cdot|s)) d_{\rho}^{\pi^*}(ds)\\
&+ \int_{S \times A} (Q_t(s,a) - Q^{\pi_t}(s,a)) (\pi^* - \pi_t)(da|s) d_{\rho}^{\pi^*}(ds) -\tau\int_S \operatorname{KL}(\pi^*(\cdot|s)|\pi_t(\cdot|s)d_{\rho}^{\pi^*}(ds)\Bigg).
\end{align}Focusing on the second term, we have
\begin{align}
&\int_{S \times A} (Q_t(s,a) - Q^{\pi_t}(s,a)) (\pi^* - \pi_t)(da|s) d_{\rho}^{\pi^*}(ds) \\
&\leq \left|Q_t(s,a) - Q^{\pi_t}(s,a) \right|_{B_b(S \times A)} \int_{S} \mathrm{TV}(\pi^*(\cdot|s), \pi_t(\cdot|s)) d_{\rho}^{\pi^*}(ds) \\
&\leq \frac{1}{\sqrt{2}}|\theta_t - \theta_{\pi_t}| \int_{S} \mathrm{KL}(\pi^*(\cdot|s) |\pi_t(\cdot|s))^{\frac{1}{2}}d_{\rho}^{\pi^*}(ds) \\
&\leq \frac{1}{\sqrt{2}}|\theta_t - \theta_{\pi_t}|\left( \int_{S} \mathrm{KL}(\pi^*(\cdot|s) |\pi_t(\cdot|s))d_{\rho}^{\pi^*}(ds) \right)^{\frac{1}{2}},
\end{align}where we used Pinsker's Inequality in the second inequality and Hölder's inequality in the final inequality. Now applying Young's inequality, there exists $\epsilon > 0$ such that
\begin{equation}
|\theta_t - \theta_{\pi_t}|\left( \int_{S} \mathrm{KL}(\pi^*(\cdot|s) |\pi_t(\cdot|s))d_{\rho}^{\pi^*}(ds) \right)^{\frac{1}{2}} \leq \frac{1}{2\epsilon}|\theta_t - \theta_{\pi_t}|^2 + \frac{\epsilon}{2}\int_{S} \mathrm{KL}(\pi^*(\cdot|s) |\pi_t(\cdot|s))d_{\rho}^{\pi^*}(ds).
\end{equation}Substituting this back into \eqref{eq:step_2_of_conv_1} and choosing $\epsilon = \sqrt{2}\tau$ we have
\begin{align}
V^{\pi_t}_{\tau}(\rho) - V^{\pi^*}_{\tau}(\rho)&= \frac{1}{1-\gamma}\Bigg(-\frac{d}{dt}\int_{S}\operatorname{KL}(\pi^*(\cdot|s)|\pi_t(\cdot|s)) d_{\rho}^{\pi^*}(ds)\\
&\qquad-\frac{\tau}{2}\int_S \operatorname{KL}(\pi^*(\cdot|s)|\pi_t(\cdot|s)d_{\rho}^{\pi^*}(ds) + \frac{1}{4\tau}|\theta_t - \theta_{\pi_t}|^2\Bigg).
\end{align}

Rearranging, we arrive at
\begin{align}
&\frac{d}{dt}\int_{S}\operatorname{KL}(\pi^*(\cdot|s)|\pi_t(\cdot|s))d_{\rho}^{\pi^*}(ds) \leq  - \frac{\tau}{2} \int_{S}\operatorname{KL}(\pi^*(\cdot|s)|\pi_t(\cdot|s))d_{\rho}^{\pi^*}(ds) \\
&\qquad-(1-\gamma)\left( V^{\pi_t}_{\tau}(\rho) - V^{\pi^*}_{\tau}(\rho) \right) + \frac{1}{4\tau}|\theta_t - \theta_{\pi_t}|^2.
\end{align}Applying Duhamel's principle yields
\begin{align}
&\int_{S}\operatorname{KL}(\pi^*(\cdot|s)|\pi_t(\cdot|s))d_{\rho}^{\pi^*}(ds) 
\leq e^{-\frac{\tau}{2} t} \int_{S}\operatorname{KL}(\pi^*(\cdot|s)|\pi_0(\cdot|s))d_{\rho}^{\pi^*}(ds) \\
&\qquad -(1-\gamma)\int_0^t e^{-\frac{\tau}{2}(t-r)} (V^{\pi_r}_{\tau}(\rho) - V^{\pi^*}_{\tau}(\rho)) dr +\frac{1}{2\tau}\int_0^t e^{-\frac{\tau}{2}(t-r)} |\theta_r - \theta_{\pi_r}|^2 dr.
\end{align}Now using that $\int_0^t e^{-\frac{\tau}{2}(t-r)} dr = \frac{2(1-e^{-\frac{\tau}{2}})}{\tau}$, we have
\begin{align}
&\int_{S}\operatorname{KL}(\pi^*(\cdot|s)|\pi_t(\cdot|s))d_{\rho}^{\pi^*}(ds)
\leq e^{-\frac{\tau}{2} t} \int_{S}\operatorname{KL}(\pi^*(\cdot|s)|\pi_0(\cdot|s))d_{\rho}^{\pi^*}(ds) \\ 
&\qquad - \frac{2(1-\gamma)(1-e^{-\frac{\tau}{2}})}{\tau} \min_{r \in [0,t]}\left( V^{\pi_r}_{\tau}(\rho)- V^{\pi^*}_{\tau}(\rho) \right) +\frac{1}{2\tau}\int_0^t e^{-\frac{\tau}{2}(t-r)} |\theta_r - \theta_{\pi_r}|^2 dr.
\end{align}Rearranging, we have
\begin{align}
\min_{r \in [0,t]} V^{\pi_r}_{\tau}(\rho)- V^{\pi^*}_{\tau}(\rho) &\leq \frac{\tau}{2(1-\gamma)(1-e^{-\frac{\tau}{2}})} \Bigg(e^{-\frac{\tau}{2} t} \int_{S}\operatorname{KL}(\pi^*(\cdot|s)|\pi_0(\cdot|s))d_{\rho}^{\pi^*}(ds) \\
&\qquad+ \frac{1}{2\tau}\int_0^t e^{-\frac{\tau}{2}(t-r)} |\theta_r - \theta_{\pi_r}|^2 dr \Bigg).
\end{align}which concludes the proof.

\end{proof}

\subsection{Proof of Theorem \ref{thm:main_thm}}
\label{sec:main_thm_proof}

\begin{proof}
\label{sec:conv_small_gamma}
Using the chain rule and the critic dynamics in \eqref{eq:dynamics_theta}, we have that for all $r \geq 0$
\begin{align}
\frac{1}{2\eta_r}\frac{d}{dr}|\theta_r - \theta_{\pi_r} |^2 &= \frac{1}{\eta_r}\left(\inner{\frac{d\theta_r}{dr}}{\theta_r - \theta_{\pi_r}} - \inner{\frac{d\theta_{\pi_r}}{dr}}{\theta_r - \theta_{\pi_r}}\right) \\
&=-\inner{g(\theta_r,\pi_r)}{\theta_r - \theta_{\pi_r}} - \frac{1}{\eta_r}\inner{\frac{d\theta_{\pi_r}}{dr}}{\theta_r - \theta_{\pi_r}}
\end{align}Let $\Gamma = \lambda_{\beta}(1-\gamma)(1-\sqrt{\gamma})$. Using Lemma \ref{lemma:gradient} and the $\lambda_{\beta}$-strong convexity of $L(\cdot,\pi;\beta)$ and recalling that $L(\theta_{\pi_r},\pi_r) = 0$ for all $r \geq 0$, it holds for all $r \geq 0$ that
\begin{align}
\frac{1}{2\eta_t}\frac{d}{dt}|\theta_t - \theta_{\pi_t} |^2 &= -\inner{g(\theta_t,\pi_t)}{\theta_t - \theta_{\pi_t}} - \frac{1}{\eta_t}\inner{\frac{d\theta_{\pi_t}}{dt}}{\theta_t - \theta_{\pi_t}} \\
& \leq -(1-\gamma)(1-\sqrt{\gamma})\inner{\nabla_{\theta}L(\theta_t,\pi_t;\beta
)}{\theta_t - \theta_{\pi_t}}- \frac{1}{\eta_t}\inner{\frac{d\theta_{\pi_t}}{dt}}{\theta_t - \theta_{\pi_t}} \\
&\leq -(1-\gamma)(1-\sqrt{\gamma})L(\theta_t,\pi_t;\beta) - \frac{\Gamma}{2}|\theta_t - \theta_{\pi_t} |^2  - \frac{1}{\eta_t}\inner{\frac{d\theta_{\pi_t}}{dt}}{\theta_t - \theta_{\pi_t}} \\ \label{eq:used_CS_youngs}
&\leq -(1-\gamma)(1-\sqrt{\gamma})L(\theta_t,\pi_t;\beta) - \frac{\Gamma}{2}|\theta_t - \theta_{\pi_t} |^2 + \frac{1}{2\eta_t}\left(\left|\frac{d\theta_{\pi_t} }{dt}\right|^2 + |\theta_t - \theta_{\pi_t} |^2\right) \\
&=-(1-\gamma)(1-\sqrt{\gamma})L(\theta_t,\pi_t;\beta) -\left(\frac{\Gamma}{2} - \frac{1}{2\eta_t} \right)|\theta_t - \theta_{\pi_t} |^2 + \frac{1}{2\eta_t}\left|\frac{d\theta_{\pi_t} }{dt}\right|^2,
\end{align} where we used Hölder's and Young's inequalities in \eqref{eq:used_CS_youngs}. Since $\eta_0 > \frac{1}{\Gamma}$ and $\eta_t$ is a non-decreasing function, it holds that $\eta_t > \frac{1}{\Gamma}$ for all $ t \geq 0$. Hence $\frac{\Gamma}{2} - \frac{1}{2\eta_t} > 0$ and thus we can drop the second term. Moreover the $\lambda_{\beta}$-strong convexity of $L(\cdot,\pi;\beta)$ along with $L(\theta_{\pi},\pi;\beta) = 0$ and $\nabla_{\theta} L(\theta_{\pi},\pi) = 0$ for all $ \pi \in \Pi_{\mu}$ gives that \[ |\theta_t - \theta_{\pi_t}|^2 \leq \frac{2}{\lambda_{\beta} }L(\theta_t,\pi_t;\beta).\] Hence for all $ r \geq 0$ we arrive at
\begin{align}
\frac{1}{2\eta_r}\frac{d}{dr}|\theta_r - \theta_{\pi_r} |^2 \leq -\frac{\Gamma}{2} |\theta_r - \theta_{\pi_r}|^2 + \frac{1}{2\eta_r}\left|\frac{d\theta_{\pi_r} }{dr}\right|^2.
\end{align} Rearranging, multiplying by $e^{-\tau(t-r)}$ and integrating over $r$ from $0$ to $t$, it holds for all $t \geq 0$ that
\begin{align}
\int_0^t e^{-\frac{\tau}{2}(t-r)} |\theta_r - \theta_{\pi_r}|^2 dr &\leq -\frac{1}{\Gamma} \int_0^t e^{-\frac{\tau}{2}(t-r)}\frac{1}{\eta_r}\frac{d}{dr}|\theta_r - \theta_{\pi_r}|^2 dr + \frac{1}{\Gamma} \int_0^t e^{-\frac{\tau}{2}(t-r)}\frac{1}{\eta_r}\left|\frac{d\theta_{\pi_r} }{dt}\right|^2 dr.
\end{align} Integrating the first term by parts (identically to \eqref{eq:int_by_parts} from the proof of Theorem \ref{Theorem:gronwall_KL}), we have
\begin{align}
&\int_0^t e^{-\frac{\tau}{2}(t-r)} |\theta_r - \theta_{\pi_r}|^2 dr \leq \frac{1}{\Gamma}\Bigg(-\frac{|\theta_t - \theta_{\pi_t}|^2}{\eta_t} + e^{-\frac{\tau}{2}t}\frac{|\theta_0 - \theta_{\pi_0}|^2}{\eta_0} \\
&\qquad+ \frac{\tau}{2} \int_0^t e^{-\frac{\tau}{2}(t-r)}\frac{1}{\eta_r}|\theta_r - \theta_{\pi_r}|^2 dr - \int_0^t |\theta_r - \theta_{\pi_r}|^2 \frac{e^{-\frac{\tau}{2}(t-r)}\frac{d}{dr}\eta_r}{\eta_r^2} dr \\
&\qquad+\int_0^t e^{-\frac{\tau}{2}(t-r)}\frac{1}{\eta_r}\left|\frac{d\theta_{\pi_r} }{dr}\right|^2 dr \Bigg).
\end{align}

Since for all $t\geq 0$ it holds that $\eta_t \geq 1$ and $\frac{d}{dt}\eta_t \geq 0$, we have that\[\int_0^t |\theta_r - \theta_{\pi_r}|^2 \frac{e^{-\frac{\tau}{2}(t-r)}\frac{d}{dr}\eta_r}{\eta_r^2} dr \geq 0.\] Thus after dropping all negative terms and using that $\eta_t \geq \eta_0$ for all $t \geq 0$, we have
\begin{equation}
\left(1-\frac{\tau}{2\Gamma \eta_0}\right)\int_0^t e^{-\frac{\tau}{2}(t-r)} |\theta_r - \theta_{\pi_r}|^2 dr \leq e^{-\frac{\tau}{2}}\frac{|\theta_0 - \theta_{\pi_0}|^2}{\Gamma \eta_0} + \int_0^t e^{-\frac{\tau}{2}(t-r)}\frac{1}{\eta_r}\left|\frac{d\theta_{\pi_r} }{dr}\right|^2 dr.
\end{equation}Since $\eta_0 > \frac{1}{2\Gamma}$ and $\tau < 1$, it holds that $1-\frac{\tau}{2\Gamma \eta_0} > 0$ and hence it holds that
\begin{align}
\int_0^t e^{-\frac{\tau}{2}(t-r)} |\theta_r - \theta_{\pi_r}|^2 dr \leq e^{-\frac{\tau}{2}}\frac{|\theta_0 - \theta_{\pi_0}|^2}{\Gamma \eta_0\left(1-\frac{\tau}{2\Gamma \eta_0}\right)} + \frac{1}{\left(1-\frac{\tau}{2\Gamma \eta_0}\right)}\int_0^t e^{-\frac{\tau}{2}(t-r)}\frac{1}{\eta_r}\left|\frac{d\theta_{\pi_r} }{dr}\right|^2 dr,
\end{align}which concludes the proof.
\end{proof}
\subsection{Proof of Theorem \ref{thm:all_gamma_conv}}
\label{sec:proof_of_all_gamma_conv}
\begin{proof}

By Theorem \ref{thm:main_thm}, we have
\begin{align}
\int_0^t e^{-\frac{\tau}{2}(t-r)} |\theta_r - \theta_{\pi_r}|^2 dr \leq e^{-\frac{\tau}{2}}\frac{|\theta_0 - \theta_{\pi_0}|^2}{\Gamma \eta_0\left(1-\frac{\tau}{2\Gamma \eta_0}\right)} + \frac{1}{\left(1-\frac{\tau}{2\Gamma \eta_0}\right)}\int_0^t e^{-\frac{\tau}{2}(t-r)}\frac{1}{\eta_r}\left|\frac{d\theta_{\pi_r} }{dr}\right|^2 dr.
\end{align} Hence it remains to characterise the growth of the final integral. Observe that for all $\pi \in \mathcal{P}(A|S)$, $\theta_{\pi} \in \mathbb{R}^N$ satisfies the least-squares optimality condition given by
\begin{equation}
\theta_{\pi} = \argmin_{\theta} L(\theta,\pi;\beta) = \left(\int_{S \times A} \phi(s,a)\phi(s,a)^{\top}\beta(da,ds) \right)^{-1}\left(\int_{S \times A} \phi(s,a) Q^{\pi}_{\tau}(s,a) \, \beta(ds,da)\right).
\end{equation}Setting $\pi = \pi_t$ and differentiating time we arrive at
\begin{equation}
\frac{d\theta_{\pi_t}}{dt} = \left(\int_{S \times A} \phi(s,a)\phi(s,a)^{\top}\beta(da,ds) \right)^{-1}\left(\int_{S \times A} \phi(s,a) \frac{d}{dt}Q^{\pi_t}(s,a) \, \beta(ds,da)\right).
\end{equation}Hence by Lemma \ref{lemma:Q_derivative}, Assumption \ref{as:bounded_phi} and Assumption \ref{as:e_value}, for all $t \geq 0$ it holds that
\begin{align}\label{eq:bounded_rate_of_change}
\left| \frac{d\theta_{\pi_t}}{dt} \right| 
&= \left| \left( \int_{S \times A} \phi(s,a)\phi(s,a)^{\top} \beta(da,ds) \right)^{-1}
\left( \int_{S \times A} \phi(s,a) \frac{d}{dt} Q^{\pi_t}_{\tau}(s,a) \, \beta(ds,da) \right) \right| \\
&\leq  \left| \left( \int_{S \times A} \phi(s,a)\phi(s,a)^{\top} \beta(da,ds) \right)^{-1}\right|_{\mathrm{op}} \left| \frac{d}{dt} Q^{\pi_t}_{\tau} \right|_{B_b(S \times A)} \\
&= \frac{1}{\lambda_{\beta}} \left| \frac{d}{dt} Q^{\pi_t} \right|_{B_b(S \times A)} \\
&= \frac{\gamma}{\lambda_{\beta}(1 - \gamma)} \left| \int_{S} 
\left( \int_{S \times A} A^{\pi_t}_{\tau}(s'',a'') 
\partial_t \pi_t(da''|s'') \, d^{\pi_t}(ds''|s') \right) 
P(ds'|\cdot,\cdot) \right|_{B_b(S \times A)} \\
&\leq \frac{\gamma}{\lambda_{\beta}(1 - \gamma)} 
\left| A^{\pi_t}_{\tau} \right|_{B_b(S \times A)} 
\sup_{s \in S} \left| \partial_t \pi_t(\cdot|s) \right|_{\mathcal{M}(A)}.
\end{align}Now using Lemma \ref{lemma:bounds_along_flow}, it holds that
\begin{align}
&\left| A^{\pi_t}_{\tau} \right|_{B_b(S \times A)} 
\sup_{s \in S} \left| \partial_t \pi_t(\cdot|s) \right|_{\mathcal{M}(A)} \leq\left| A^{\pi_t}_{\tau} \right|_{B_b(S \times A)}\left| A_t\right|_{B_b(S \times A)}\\
&\leq \left( 2\left|Q^{\pi_t}_\tau\right|_{B_b(S\times A)} + 2\tau \left|\ln \frac{d\pi_t}{d\mu} \right|_{B_b(S\times A)} \right)\left( 2\left|Q_t\right|_{B_b(S\times A)} + 2\tau \left|\ln \frac{d\pi_t}{d\mu} \right|_{B_b(S\times A)} \right).
\end{align}  Hence by Corollaries \ref{corr:all_gamma_KL} and \ref{corr:all_gamma_theta} and Lemma \ref{lemma:bounds_along_flow}, there exists $\alpha_1,\alpha_2 > 0$ such that \[\left| \frac{d\theta_{\pi_t} }{dt}\right|^2 \leq \alpha_1 e^{\alpha_2 t}.\] Thus Theorem \ref{thm:main_thm} becomes
\begin{align}
\int_0^t e^{-\frac{\tau}{2}(t-r)} |\theta_r - \theta_{\pi_r}|^2 dr \leq e^{-\frac{\tau}{2}}\frac{|\theta_0 - \theta_{\pi_0}|^2}{\Gamma \eta_0\left(1-\frac{\tau}{2\Gamma \eta_0}\right)} + \frac{\alpha_1}{\left(1-\frac{\tau}{2\Gamma \eta_0}\right)}\int_0^t e^{-\frac{\tau}{2}(t-r)}\frac{e^{\alpha_2 r}}{\eta_r}dr.
\end{align}Let $\eta_t = \eta_0 e^{k_1 t}$ for any $k_1 > \frac{\tau}{2} + \alpha_2$. Then observe that 
\begin{align}
\int_0^t e^{-\frac{\tau}{2}(t-r)}\frac{e^{\alpha_2 r}}{\eta_r}dr &= \frac{1}{\eta_0}e^{-\frac{\tau}{2}t} \int_0^t e^{\left(\frac{\tau}{2} + \alpha_2 - k_1\right)r} dr \\
&\leq \frac{1}{\eta_0}e^{-\frac{\tau}{2}t} \left(\frac{e^{\left(\frac{\tau}{2} + \alpha_2 - k_1\right)t}-1}{\frac{\tau}{2} + \alpha_2 - k_1} \right) \\
&\leq  \frac{e^{-\frac{\tau}{2}t}}{\eta_0\left(\frac{\tau}{2} + \alpha_2 - k_1\right)},
\end{align}
hence all together it holds that
\begin{align}
\int_0^t e^{-\frac{\tau}{2}(t-r)} |\theta_r - \theta_{\pi_r}|^2 dr \leq e^{-\frac{\tau}{2}}\frac{|\theta_0 - \theta_{\pi_0}|^2}{\Gamma \eta_0\left(1-\frac{\tau}{2\Gamma \eta_0}\right)} + e^{-\frac{\tau}{2}t}\frac{\alpha_1}{\left(\eta_0-\frac{\tau}{2\Gamma}\right)\left(\frac{\tau}{2} + \alpha_2 - k_1 \right)}.
\end{align} Substituting this into the result from Theorem \ref{thm:main_thm} concludes the proof.
\end{proof}

\section{Additional results}\label{sec:small_gamma}

\begin{corollary}[Uniform boundedness]\label{corr:small_gamma_KL}Under the same assumptions as Theorem \ref{Theorem:gronwall_KL}, for $\gamma \in (0,1)$ such that $\frac{64 \gamma^2 }{\Gamma^2-\frac{\Gamma\tau}{\eta_0} } < 1$ it holds that $a_2 < \tau$ and for all $t \geq 0$ it holds that
\begin{equation}
\operatorname{KL}(\pi_t(\cdot|s)|\mu)^2\leq\frac{a_1\tau}{\tau - a_2}
\end{equation}
\end{corollary}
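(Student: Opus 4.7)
The starting point is the convolution-type integral inequality established in Theorem \ref{Theorem:gronwall_KL},
\begin{equation*}
\mathrm{K}_t^2 \leq a_1 + a_2 \int_0^t e^{-\tau(t-r)}\mathrm{K}_r^2\,dr.
\end{equation*}
The plan proceeds in two stages: first, verify the strict inequality $a_2 < \tau$ from the hypothesis on $\gamma$; second, convert the convolution Gronwall inequality into a uniform bound via a change of variables and classical Gronwall.

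For the first stage, $a_2 = a_2(\tau,\eta_0,\gamma,\lambda_\beta)$ is manufactured in the proof of Theorem \ref{Theorem:gronwall_KL}. Tracing through its explicit form, it is proportional to $\tau\gamma^2/(\Gamma^2-\Gamma\tau/\eta_0)$, and the condition $\eta_0 > \tau/\Gamma$ ensures $\Gamma^2-\Gamma\tau/\eta_0 > 0$. The hypothesis $\tfrac{64\gamma^2}{\Gamma^2-\Gamma\tau/\eta_0} < 1$ is then precisely calibrated so that $a_2 < \tau$; this reduces to an arithmetic check against the explicit expression of $a_2$ coming out of the derivation of Theorem \ref{Theorem:gronwall_KL}.

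For the second stage, I would set $w(t) := e^{\tau t}\mathrm{K}_t^2$. Multiplying the integral inequality by $e^{\tau t}$ gives
\begin{equation*}
w(t) \leq a_1 e^{\tau t} + a_2 \int_0^t w(r)\,dr.
\end{equation*}
Applying the standard Gronwall lemma (Lemma \ref{lemma:gronwall}) yields
\begin{equation*}
w(t) \leq a_1 e^{\tau t} + a_1 a_2 \int_0^t e^{\tau r} e^{a_2(t-r)}\,dr = \frac{a_1\tau}{\tau - a_2} e^{\tau t} - \frac{a_1 a_2}{\tau - a_2} e^{a_2 t},
\end{equation*}
where evaluating the integral in closed form uses $a_2 < \tau$ so that $\tau - a_2 > 0$. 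Dividing through by $e^{\tau t}$ and dropping the nonpositive term $-\frac{a_1 a_2}{\tau - a_2} e^{-(\tau - a_2)t}$ (since $a_2 \geq 0$) produces the claimed uniform bound $\mathrm{K}_t^2 \leq \frac{a_1\tau}{\tau - a_2}$, valid for all $t \geq 0$ and uniformly in $s \in S$.

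The main obstacle is really only the first stage: one must unpack the constant $a_2$ produced inside the proof of Theorem \ref{Theorem:gronwall_KL} and check that the numerical factor $64$ together with the denominator $\Gamma^2 - \Gamma\tau/\eta_0$ matches the coefficient accumulated there. The Gronwall step itself is routine, and the particular bootstrap $C = \frac{a_1\tau}{\tau - a_2}$ can also be derived (and double-checked) directly by a fixed-point argument: if one guesses $\mathrm{K}_t^2 \leq C$, the inequality closes precisely when $C(1 - a_2/\tau) \geq a_1$, giving the same sharp constant.
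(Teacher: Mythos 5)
Your proposal is correct, and the arithmetic in your first stage does close: from the proof of Theorem \ref{Theorem:gronwall_KL}, $a_2 = \frac{32\sigma_2}{\tau}$ with $\sigma_2 = \frac{2\tau^2\gamma^2}{\Gamma^2(1-\frac{\tau}{\Gamma\eta_0})}$, hence $a_2 = \frac{64\tau\gamma^2}{\Gamma^2 - \frac{\Gamma\tau}{\eta_0}}$, and $a_2 < \tau$ is exactly the stated condition on $\gamma$. For the second stage you take a slightly different route from the paper: you substitute $w(t) = e^{\tau t}\mathrm{K}_t^2$ and invoke the convolution form of Gronwall, integrating $a_1 a_2\int_0^t e^{\tau r}e^{a_2(t-r)}\,dr$ in closed form and discarding the nonpositive remainder, which yields $\frac{a_1\tau}{\tau-a_2}$ exactly. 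The paper instead bounds $\mathrm{K}_r^2$ inside the integral by $\sup_{r\in[0,t]}\mathrm{K}_r^2$, uses $\int_0^t e^{-\tau(t-r)}dr \le \frac{1}{\tau}$, and absorbs the resulting $\frac{a_2}{\tau}\sup_{r\in[0,t]}\mathrm{K}_r^2$ term --- precisely the fixed-point bootstrap you mention in your closing remark. The two arguments give the identical constant; your Gronwall version has the minor advantage of not needing the a priori finiteness of $\sup_{r\in[0,t]}\mathrm{K}_r^2$ (which the paper's absorption step tacitly borrows from Corollary \ref{corr:all_gamma_KL}), while the paper's version is shorter and avoids evaluating any integrals. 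One small caveat: the conclusion you quote from Gronwall, $w(t) \le b(t) + a_2\int_0^t b(r)e^{a_2(t-r)}dr$, is the standard integral form rather than the literal statement of Lemma \ref{lemma:gronwall}, so you should either cite the standard form or rederive it; this does not affect correctness.
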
 

\subsection{Proof of Corollary \ref{corr:small_gamma_KL}}
\label{sec:small_gamma_KL}
\begin{proof}
By Theorem \ref{Theorem:gronwall_KL} we have that
\begin{equation}
\mathrm{K
}_t^2 \leq  a_1 + a_2\int_0^t e^{-\tau(t-r)}\mathrm{K}_r^2 dr.
\end{equation}Taking the supremum over $[0,t]$ on the right hand side, we have
\begin{equation}
\mathrm{K
}_t^2 \leq  a_1 + \frac{a_2}{\tau}\sup_{r \in [0,t]} \mathrm{K}_r^2.
\end{equation} Since this holds for all $ t \geq 0$, we have
\begin{equation}
\sup_{r \in [0,t]} \mathrm{K}_r^2 \leq a_1 + \frac{a_2}{\tau} \sup_{r \in [0,t]} \mathrm{K}_r^2.
\end{equation} Now forcing $1-\frac{a_2}{\tau} > 0$, which is equivalent to the condition\[\frac{64 \gamma^2 }{\Gamma^2-\frac{\Gamma\tau}{\eta_0} } < 1.\] Hence after rearranging we have
\begin{equation}
\mathrm{K}_t^2 \leq  \sup_{r \in [0,t]} \mathrm{K}_r^2 \leq \frac{a_1\tau}{\tau - a_2}
\end{equation}
\end{proof}
\begin{remark}\label{remark:tautoinf}
    Observe that if one does not apply the loose upper bound $e^{-\tau t}\leq 1$ in \eqref{eq:duhamel} from the proof of Theorem \ref{Theorem:gronwall_KL}, it holds that 
    \begin{equation}
        a_1 = a_1(t) = 8e^{-2\tau t}(C_1)^2 + \frac{32}{\tau}\sigma_1
    \end{equation} with $\sigma_1 : = \frac{|\theta_0|^2}{\Gamma \eta_0 \left(1-\frac{\tau}{\Gamma\eta_0} \right)} + \frac{2 |c|_{B_b(S \times A)}^2}{\Gamma^2 \tau \left( 1-\frac{\tau}{\Gamma\eta_0}\right)}$. Then choosing $\eta_0 = \tau + \epsilon$ for any $\epsilon > 0$ so that the conditions of Theorem \ref{Theorem:gronwall_KL} holds, formally sending $\tau \to \infty$ we obtain $\operatorname{KL}(\pi_t(\cdot|s)|\mu) \to 0$ for all $s \in S$.
\end{remark}

\begin{corollary}\label{cor:small_gamma_theta}Under the conditions of Corollary \ref{corr:small_gamma_KL}, there exists $R > 0$ such that for all $t \geq 0$ it holds that
\begin{equation}
|\theta_t| \leq R
\end{equation}
\end{corollary}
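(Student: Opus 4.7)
The plan is to feed the uniform KL bound from Corollary \ref{corr:small_gamma_KL} into the Lyapunov drift inequality of Lemma \ref{lemma:lyap_drift} and then invoke a standard comparison principle for scalar ODEs.

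First, by Corollary \ref{corr:small_gamma_KL}, under the hypothesis $\frac{64\gamma^2}{\Gamma^2 - \Gamma\tau/\eta_0} < 1$ we have $a_2 < \tau$ and
\[
\mathrm{K}_t^2 \leq C := \frac{a_1 \tau}{\tau - a_2} \qquad \text{for all } t \geq 0.
\]
Substituting this bound into Lemma \ref{lemma:lyap_drift} and multiplying through by $2\eta_t$ yields
\[
\frac{d}{dt}|\theta_t|^2 \leq -\eta_t \Gamma |\theta_t|^2 + \eta_t M, \qquad M := \frac{2\tau^2 \gamma^2 C}{\Gamma} + \frac{2|c|_{B_b(S\times A)}^2}{\Gamma},
\]
where the right-hand side is of the form ``negative linear drift plus bounded forcing'' with a strictly positive coefficient $\eta_t \Gamma$ (recall $\eta_t \geq 1$).

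Next, set $v_t := |\theta_t|^2 - M/\Gamma$, so that the above inequality rewrites as $v_t' \leq -\eta_t \Gamma\, v_t$. I would then argue by a standard comparison argument that $v_t \leq \max(v_0, 0)$ for all $t \geq 0$: on any maximal interval on which $v > 0$, Gronwall's lemma applied to $v' \leq -\eta_t \Gamma v$ gives $v_t \leq v_0 \exp\bigl(-\Gamma \int_0^t \eta_s\,ds\bigr) \leq v_0$, while whenever $v$ reaches $0$ from above the inequality $v' \leq 0$ prevents re-entry into the positive half-line. Translating back, this gives the claimed uniform bound
\[
|\theta_t|^2 \leq \max\!\Bigl(|\theta_0|^2,\; \tfrac{M}{\Gamma}\Bigr) =: R^2 \qquad \text{for all } t \geq 0.
\]

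The argument is essentially routine once the KL bound is in hand, so the only mildly delicate point is the comparison step: although the forcing is controlled by $\eta_t$ (which is merely non-decreasing and may itself be unbounded), both the drift and the forcing scale by the same factor $\eta_t$, and it is this cancellation that produces a time-independent stationary level $M/\Gamma$. I do not anticipate a substantive obstacle beyond writing out this comparison carefully.
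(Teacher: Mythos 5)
Your proposal is correct and follows essentially the same route as the paper: plug the uniform bound $\mathrm{K}_t^2 \leq \frac{a_1\tau}{\tau - a_2}$ from Corollary \ref{corr:small_gamma_KL} into Lemma \ref{lemma:lyap_drift} and conclude via a Gronwall/comparison argument, the key point in both being that the drift and the forcing carry the same factor $\eta_t$ so the stationary level is time-independent. You merely spell out the comparison step that the paper leaves implicit under the phrase ``follows by Gronwall's Lemma''.
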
 

\subsection{Proof of Corollary \ref{cor:small_gamma_theta}}
\label{sec:small_gamma_theta}
\begin{proof}
By Corollary \ref{corr:small_gamma_KL}, for sufficiently small $\gamma > 0$ it holds that for all $t \geq 0$, \[\mathrm{K}_t^2 \leq \frac{a_1\tau}{\tau - a_2}.\] Hence by Lemma \ref{lemma:lyap_drift} we have
\begin{align}
\frac{1}{2}\frac{d}{dt}\left|\theta_t \right|^2 &\leq -\eta_t\frac{\Gamma}{2} |\theta_t|^2 + \eta_t\left(\frac{2|c|_{B_b(S \times A)}^2 + 2\tau^2\gamma^2 \left(\frac{a_1\tau}{\tau - a_2}\right)}{\Gamma^2}\right).
\end{align} The uniform boundedness in time of $|\theta_t|$ then follows by Gr\"onwall's Lemma (Lemma \ref{lemma:gronwall}).
\end{proof}

\begin{corollary}\label{thm:small_gamma_regime}
Under the same assumptions as Theorem \ref{thm:main_thm}, for $\gamma \in (0,1)$ such that $\frac{2\sqrt{2}\gamma}{\sqrt{\Gamma^2-\frac{\Gamma\tau}{\eta_0} }} < 1$ there exists $d_1 > 0$ such that for all $t \geq 0$,
\begin{align}
\min_{r \in [0,t]} V^{\pi_r}_{\tau}(\rho)- V^{\pi^*}_{\tau}(\rho) &\leq \frac{\tau}{2(1-\gamma)(1-e^{-\frac{\tau}{2}t})} \Bigg(e^{-\frac{\tau}{2} t} \int_{S}\operatorname{KL}(\pi^*(\cdot|s)|\pi_0(\cdot|s))d_{\rho}^{\pi^*}(ds) \\
&\qquad+ d_1\int_0^t e^{-\frac{\tau}{2}(t-r)}\frac{1}{\eta_r} dr \Bigg).
\end{align}
\end{corollary}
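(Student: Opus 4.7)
My plan is to derive the corollary by inserting the bound of Theorem \ref{thm:main_thm} into Theorem \ref{thm:convergence_1} and then promoting the integrand
\[
\int_0^t e^{-\tau(t-r)/2}\frac{1}{\eta_r}\Bigl|\tfrac{d\theta_{\pi_r}}{dt}\Bigr|^2\,dr
\qquad\text{to}\qquad
C^2\int_0^t e^{-\tau(t-r)/2}\frac{1}{\eta_r}\,dr
\]
by establishing a uniform-in-$t$ bound on $\bigl|\tfrac{d\theta_{\pi_t}}{dt}\bigr|$. The small-$\gamma$ hypothesis enters at precisely this step: it is designed to trigger the uniform boundedness statements of Corollaries \ref{corr:small_gamma_KL} and \ref{cor:small_gamma_theta}, which feed directly into the bound on $\tfrac{d\theta_{\pi_t}}{dt}$.

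To bound $\tfrac{d\theta_{\pi_t}}{dt}$, I would exploit the linear realisability assumption. Integrating the identity $\inner{\theta_{\pi_t}}{\phi(s,a)} = Q^{\pi_t}_\tau(s,a)$ against $\phi(s,a)\beta(ds,da)$ and using Assumption \ref{as:e_value} to invert the feature covariance $\Sigma_\beta := \int \phi\phi^\top d\beta$ yields
\[
\theta_{\pi_t} \;=\; \Sigma_\beta^{-1}\int_{S\times A} Q^{\pi_t}_\tau(s,a)\,\phi(s,a)\,\beta(ds,da),
\]
so that $\tfrac{d\theta_{\pi_t}}{dt} = \Sigma_\beta^{-1}\int \bigl(\tfrac{d}{dt}Q^{\pi_t}_\tau\bigr)\,\phi\,d\beta$. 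Substituting the expression from Lemma \ref{lemma:Q_derivative} and the Fisher--Rao dynamics \eqref{eq:fisherrao_dynamics} ($\partial_t\pi_t = -A_t\pi_t$), and using $|\Sigma_\beta^{-1}|_{\mathrm{op}} \leq 1/\lambda_\beta$ together with $|\phi|\leq 1$ (Assumption \ref{as:bounded_phi}), I obtain
\[
\Bigl|\tfrac{d\theta_{\pi_t}}{dt}\Bigr| \;\leq\; \frac{\gamma}{\lambda_\beta(1-\gamma)}\,\bigl|A^{\pi_t}_\tau\bigr|_{B_b(S\times A)}\,\bigl|A_t\bigr|_{B_b(S\times A)}.
\]
Each advantage has $B_b$-norm controlled by $2\bigl(|Q|_{B_b}+\tau\mathrm{K}_t\bigr)$, with $|Q^{\pi_t}_\tau|_{B_b}\leq (|c|_{B_b}+\gamma\tau\mathrm{K}_t)/(1-\gamma)$ from the Bellman equation of Theorem \ref{thm:dynamics_programming}, and $|Q_t|_{B_b} = |\inner{\theta_t}{\phi}|_{B_b}\leq |\theta_t|$.

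Under the smallness hypothesis, read as sufficient to activate the uniform bounds of Corollaries \ref{corr:small_gamma_KL} and \ref{cor:small_gamma_theta}, both $\mathrm{K}_t$ and $|\theta_t|$ are bounded uniformly in $t$, so there is a constant $C>0$ with $\bigl|\tfrac{d\theta_{\pi_t}}{dt}\bigr|^2 \leq C^2$ for all $t\geq 0$. Inserting this into Theorem \ref{thm:main_thm} produces $b_2 C^2\int_0^t e^{-\tau(t-r)/2}\eta_r^{-1}dr$, while the residual $b_1 e^{-\tau t/2}$ decays at the same rate as the initial KL term in Theorem \ref{thm:convergence_1} and can be folded into it by enlarging the leading constant. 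Setting $d_1 := b_2 C^2/(2\tau)$ and plugging into Theorem \ref{thm:convergence_1} gives the claim. The main obstacle I anticipate is not the structural chain of inequalities but the constant-bookkeeping needed to reconcile the stated threshold $\tfrac{2\sqrt{2}\gamma}{\sqrt{\Gamma^2-\Gamma\tau/\eta_0}}<1$ with the threshold $\tfrac{64\gamma^2}{\Gamma^2-\Gamma\tau/\eta_0}<1$ required to invoke Corollary \ref{corr:small_gamma_KL}; matching them will likely require a sharper choice of absorbing constants in the Gronwall step underlying Theorem \ref{Theorem:gronwall_KL}.
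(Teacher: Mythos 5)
Your proposal follows essentially the same route as the paper's own proof: bound $\bigl|\tfrac{d\theta_{\pi_t}}{dt}\bigr|$ via the least-squares representation of $\theta_{\pi_t}$, Lemma \ref{lemma:Q_derivative} and Lemma \ref{lemma:bounds_along_flow}, make this bound uniform in $t$ using Corollaries \ref{corr:small_gamma_KL} and \ref{cor:small_gamma_theta}, and then substitute into Theorem \ref{thm:main_thm} and Theorem \ref{thm:convergence_1}. The threshold mismatch you flag is real and is present in the paper too: the stated condition $\tfrac{2\sqrt{2}\gamma}{\sqrt{\Gamma^2-\Gamma\tau/\eta_0}}<1$ is equivalent to $8\gamma^2<\Gamma^2-\Gamma\tau/\eta_0$, which is strictly weaker than the $64\gamma^2<\Gamma^2-\Gamma\tau/\eta_0$ condition needed to invoke Corollary \ref{corr:small_gamma_KL}, and the paper's proof invokes that corollary without reconciling the two constants.
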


\subsection{Proof of Theorem \ref{thm:all_gamma_conv}}
\begin{proof}

Following completely identically to the proof of Theorem \ref{thm:all_gamma_conv}, we have
\begin{align}
\left| \frac{d\theta_{\pi_t}}{dt} \right|  &\leq \frac{\gamma}{\lambda_{\beta}(1 - \gamma)} 
\left| A^{\pi_t}_{\tau} \right|_{B_b(S \times A)} 
\sup_{s \in S} \left| \partial_t \pi_t(\cdot|s) \right|_{\mathcal{M}(A)} \\
&\leq \frac{4}{(1-\gamma)^2}\left(\left|c \right|_{B_b(S\times A)} + \mathrm{K}_t \right)^2 + 4\tau \left(C_1 + \frac{2}{\tau} \sup_{r \in [0,t]} |\theta_r| + \sup_{r \in [0,t]}K_r \right)^2.
\end{align}Then by Corollaries \ref{corr:small_gamma_KL} and \ref{cor:small_gamma_theta}, there exists $b_2 > 0$ such that $ \left| \frac{d\theta_{\pi_t}}{dt} \right|^2 \leq d_1$. Hence by Theorem \ref{thm:main_thm} we have
\begin{align}
\min_{r \in [0,t]} V^{\pi_r}_{\tau}(\rho)- V^{\pi^*}_{\tau}(\rho) &\leq \frac{\tau}{2(1-\gamma)(1-e^{-\frac{\tau}{2}})} \Bigg(e^{-\frac{\tau}{2} t} \Bigg(\int_{S}\operatorname{KL}(\pi^*(\cdot|s)|\pi_0(\cdot|s))d_{\rho}^{\pi^*}(ds) \\
&\qquad+ d_1\int_0^t e^{-\frac{\tau}{2}(t-r)}\frac{1}{\eta_r} dr \Bigg)\,.
\end{align}
\end{proof}

\section*{Acknowledgements}
DZ was supported by the EPSRC Centre for Doctoral Training in Mathematical Modelling, Analysis and Computation (MAC-MIGS) funded by the UK Engineering and Physical Sciences Research Council (grant EP/S023291/1), Heriot-Watt University and the University of Edinburgh. 
The work on this project by D\v{S} was partially supported by a grant from the Simons Foundation.
D\v{S} and LS acknowledge funding from the UKRI Prosperity Partnerships grant APP43592: AI$^2$ - Assurance and Insurance for Artificial Intelligence, which supported this work.
The authors would like to thank the Isaac Newton Institute for Mathematical Sciences, Cambridge, for support and hospitality during the programme ``Bridging Stochastic Control And Reinforcement Learning'', where work on this paper was undertaken. 
This work was supported by EPSRC grant EP/V521929/1.

Last but not least, the authors would like to thank the anonymous reviewers whose insightful comments have helped improve the paper.

\bibliographystyle{plain}
\bibliography{iclr2026_conference}

\end{document}